\newtheorem{definition}{Definition}[section]
\newtheorem{theorem}[definition]{Theorem}
\newtheorem*{mthm}{Theorem}
\theoremstyle{plain}
\newtheorem*{acknowledgement}{Acknowledgement}
\newtheorem{corollary}[definition]{Corollary}
\newtheorem{lemma}[definition]{Lemma}
\newtheorem{notation}[definition]{Notation}
\newtheorem{proposition}[definition]{Proposition}
\newtheorem{defrmk}[definition]{Definition and Remark}
\theoremstyle{definition}
\newtheorem{remark}[definition]{Remark}
\newcommand{\cE}{\mathscr{E}}
\newcommand{\OO}{\mathscr{O}}
\newcommand{\PP}{\mathbb{P}}
\newcommand{\PPr}{{\mathbb{P}^r}}
\newcommand{\PPE}{{\PP(\cE)}}
\newcommand{\cC}{\mathscr{C}}
\newcommand{\cL}{\mathscr{L}}
\newcommand{\cH}{\mathscr{H}}
\newcommand{\cF}{\mathscr{F}}
\newcommand{\cHom}{\mathscr{H}\!om}
\newcommand{\Tor}{\text{\upshape{Tor}}}
\newcommand{\Aut}{\text{\upshape{Aut}}}
\newcommand{\codim}{\text{\upshape{codim}}}
\newcommand{\Hom}{\text{\upshape{Hom}}}
\newcommand{\Pf}{\text{\upshape{Pf}}}
\newcommand{\im}{\text{\upshape{im}}}
\newcommand{\Syz}{\text{\upshape{Syz}}}
\newcommand{\rank}{\text{\upshape{rank}}}
\newcommand{\Pic}{\text{\upshape{Pic}}}
\begin{document}

\numberwithin{equation}{section}

\title{Syzygies of 5-gonal Canonical Curves}

\author{Christian Bopp}
\address{Universit\"at des Saarlandes}
\email{bopp@math.uni-sb.de}
\keywords{syzygies, canonical curves, gonality, special divisor}
\subjclass[2010]{13D02,14H51}

\begin{abstract}
We show that for $5$-gonal curves of odd genus $g\geq 13$ and even genus $g\geq 28$ 
 the $\lceil \frac{g-1}{2}\rceil$-th syzygy module of the curve is not determined by the syzygies of the scroll swept out by the special pencil of degree $5$.
\end{abstract}

\maketitle


\section{Introduction}
In this article we study minimal free resolutions of the coordinate ring $S_C$ of $5$-gonal canonical curves $C\subset \PP^{g-1}$. The \emph{gonality} of a curve $C$ is defined as the minimal degree of a nonconstant map
$
C{\longrightarrow} \PP^1.
$
\newline
A pencil of degree $k$ on a canonically embedded curve $C$ of genus $g$, defining a degree $k$ map $C\to \PP^1$ sweps out a rational normal scroll  $X$ of dimension $d=k-1$ and degree $f=g-k+1$.
 It follows that the linear strand of $X$ is a subcomplex of the linear strand of $C$. To be more precise the scroll contributes with an Eagon-Northcott complex of length $g-k$ to the linear strand of the curve. 
 \newline
 It is well known that the gonality of a general genus $g$ curve is precisely $\lceil \frac{g+2}{2}\rceil$. 
 Thus if $X$ is the scroll swept out by a pencil of minimal degree on a general canonical curve, then
  $\beta_{n,n+1}(X)=0$, where $n=\lceil \frac{g-1}{2}\rceil$.
 For odd genus $g$ and ground field of characteristic $0$, Voisin and Hirschowitz-Ramanan (see \cite{V2} and \cite{Hirs-Ram}) showed that the locus 
 $$
 \mathcal{K}_g:=\{C\in \mathcal{M}_g\  |\  \beta_{n,n+1}(C)\neq 0\}
 $$
  defines an effective divisor in the moduli space of curves, the so-called \emph{Koszul divisor}. \newline
 On the Hurwitz-scheme $\cH_{g,k}$ a natural analogue of the Koszul divisor could be the following
 $$
 \mathcal{K}_{g,k}:=\{C\in \mathcal{H}_{g,k}\ | \ \beta_{n,n+1}(C)>\beta_{n,n+1}(X)\}.
 $$
 For odd genus $ \mathcal{K}_{g,k}$ is a divisor in the Hurwitz-scheme if 
 $
 \beta_{n,n+1}(C)=\beta_{n,n+1}(X)
 $
 holds for a general curve $C\in \cH_{g,k}$ (see \cite{Hirs-Ram}), 
 which we verified computationally for genus $g<13$ and $k<\lceil\frac{g-2}{2}\rceil$.
\newline
 We will show that $\mathcal{K}_{g,5}$ is not a divisor for odd $g\geq 13$ by proving the following theorem.
 \begin{mthm}
Let $C$ be a $5$-gonal canonical curve of genus $g$  and $n= \lceil \frac{g-1}{2}\rceil$. Then
$$\beta_{n,n+1}(C)>\beta_{n,n+1}(X)$$ for odd genus $g\geq 13$ and even genus $g\geq 28$.
\end{mthm}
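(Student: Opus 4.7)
The plan is to bound $\beta_{n,n+1}(C)$ from below by producing explicit linear syzygies of the canonical ideal $I_C$ that are not accounted for by the Eagon-Northcott complex of the scroll $X$. Since the pencil has degree $5$, the scroll $X$ is four-dimensional of degree $f=g-4$; its ruling realises $X$ as a $\PP^3$-bundle $\PPE$ over $\PP^1$, and $C\subset X$ has codimension two. Schreyer's construction produces a length-three locally free resolution of $\OO_C$ over $\OO_X$ whose terms split as direct sums of line bundles of the form $\OO_X(a\,H+b\,R)$, with $H$ the hyperplane class and $R$ the ruling. The numerical data of this resolution are completely determined by the scrollar invariants of $C$ and by the splitting type of the bundle $\cE$.

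I would first combine this Schreyer resolution with the Eagon-Northcott resolution of $\OO_X$ into a double complex and totalise to obtain a (non-minimal) free $S$-resolution of $S_C$. The contribution to position $(n,n+1)$ then splits into the Eagon-Northcott piece, which accounts exactly for $\beta_{n,n+1}(X)=n\binom{g-4}{n+1}$, and correction terms coming from the higher-homological-degree pieces of Schreyer's complex twisted by a Koszul differential. These correction terms can be identified with certain Koszul cohomology groups $K_{p,q}$ of line bundles on $X$, which through the projection $\pi\colon X\to\PP^1$ reduce to a cohomology computation for direct sums of line bundles on $\PP^1$, amenable to a Horrocks-type splitting and a direct combinatorial count.

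The core of the argument is then to verify that, for odd $g\geq 13$ respectively even $g\geq 28$, at least one of these correction terms survives the cancellations that occur when the mapping cone is minimised. The parity asymmetry appears here: for odd $g$ the symmetry of the linear strand simplifies the bookkeeping and yields the weaker bound $g\geq 13$, whereas for even $g$ a further term from the adjacent strand can intrude into the middle column of the Betti table, and only the sharper arithmetic inequality satisfied for $g\geq 28$ guarantees the required strict inequality. Each of the two cases reduces to a binomial identity comparing the total number of generators of the relevant Schreyer term against the dimension of the space of syzygies it can absorb in the minimalisation process.

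The main obstacle is controlling the minimalisation step: one must show that a specific differential in the double complex, built from the multiplication maps between spaces of sections of line bundles on $X$, fails to be surjective, so that the extra syzygies are not killed. I expect to handle this either by an explicit cohomological computation on $\PPE$, exploiting the $\PP^3$-bundle structure, or by a semicontinuity argument that reduces to a balanced degeneration of $\cE$ where the relevant multiplication map becomes diagonal and can be analysed by a direct monomial count.
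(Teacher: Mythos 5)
Your framework coincides with the paper's: resolve $\OO_C$ over $\OO_\PPE$ by Schreyer's length-three Pfaffian complex, resolve each term $\OO_\PPE(aH+bR)$ by an Eagon--Northcott type complex, and read off $\beta_{n,n+1}(C)$ from the non-minimal part of the iterated mapping cone. But the core of your argument has a genuine gap. Under the balancing conditions the only cancellation affecting position $(n,n+1)$ is the constant-entry comparison map $\psi\colon\sum_i\cC^{b_i}_{n-2}(-3)\to\sum_i\cC^{a_i}_{n-2}(-2)$ induced by the skew-symmetric matrix $\Psi$, and one has $\beta_{n,n+1}(C)=\beta_{n,n+1}(X)+\dim\ker\psi$ with $\rank(\text{source})\le\rank(\text{target})$ (equality for odd $g$). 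Hence no ``binomial identity comparing the total number of generators against the dimension of the space of syzygies it can absorb'' can ever yield the theorem: the dimension count is consistent with $\psi$ being injective, in which case $\beta_{n,n+1}(C)=\beta_{n,n+1}(X)$. The whole content of the statement is that $\psi$ fails to be injective even though numerically it could be, and this cannot be detected by Koszul-cohomology bookkeeping or by cohomology of line bundles on $\PP^1$, since $\ker\psi$ depends on the actual entries of $\Psi$ and not only on splitting data. (Note also that you aim at non-surjectivity of a differential; the relevant condition is non-injectivity, and for even genus the two are not equivalent.)

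What is missing is the explicit construction that does the work in the paper: skew-symmetry forces the diagonal block $\Psi_{(11)}$ to vanish, so it suffices to find a kernel element of the map $\psi_{(41)}$ induced by a single column of $\Psi$; by the Martens--Schreyer description of comparison maps, $\psi_{(41)}$ factors through the multiplication $S_{n-a-2}G\otimes S_{n-a-3}G\to S_{2n-2a-5}G$ with $\dim G=2$, which is far from injective, and the inequality $n-2\ge 4(n-a-2)$ (valid exactly in the stated genus ranges) allows one to complete the vectors $\Psi_{(41)}(g\cdot g_i')$ to a decomposable element $f\otimes g$ with $f\wedge\Psi_{(41)}(g\cdot g')=0$ for all $g'$. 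This, together with the range in which $\min\{b_i\}\ge n-2\ge\max\{a_i\}$ holds, is the true source of the odd/even asymmetry --- not an adjacent strand intruding into the middle column. Finally, your proposed degeneration is oriented the wrong way: Betti numbers are upper semicontinuous and the rank of $\psi$ is lower semicontinuous, so exhibiting extra syzygies on a special fibre says nothing about the general curve; the paper proves non-injectivity for a general $\Psi$ and uses semicontinuity only to pass from the general curve to all curves.
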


The proof is based on the techniques  introduced in \cite{Sch}, which we recall in Section \ref{section2}. First we resolve the curve $C$ as an $\OO_\PPE$-module, where $\PPE$ is the bundle associated to the rational normal scroll swept out by the $g^1_5$. 
In the next step we resolve the $\OO_\PPE$-modules occurring in this resolution by Eagon-Northcott type complexes. An iterated mapping cone construction then gives a non-minimal resolution of $C\subset \PP^{g-1}$. 
By determining the ranks of the maps which give rise to non-minimal parts in the iterated mapping cone we can decide whether the curve has extra syzygies. 
In the last section we discuss the genus $13$ case in detail.

\section{Scrolls, Pencils and Canonical Curves}\label{section2}
In this section we briefly summarize the connections between pencils on canonical curves and rational normal scrolls. Most of this section follows Schreyer's article \cite{Sch}.
\begin{definition}
Let $e_1 \geq e_2 \geq \dots \geq e_d\geq 0$ be integers, 
$\cE=\OO_{\PP^1}(e_1)\oplus\dots\oplus\OO_{\PP^1}(e_d)$ and let 
$\pi: \PPE\to \PP^1$ be the corresponding $\PP^{d-1}$-bundle.\newline
A \emph{rational normal scroll} $X=S(e_1,\dots,e_d)$ of  type $(e_1,\dots,e_d)$ is the image of 
$$
j:\PP(\cE)\to \PP H^0(\PPE,\OO_\PPE(1))= \PP^r
$$ \vspace{-1mm}
where $r=f+d-1$ with $f=e_1+\dots+e_d\geq2$.
\end{definition}
In \cite{H} it is shown that
the variety $X$ defined above is a non-degenerate $d$-dimensional variety of minimal degree
$\deg X=f=r-d+1=\codim X+1$.
If  $e_1,\dots,e_d>0$, then $j:\PP(\cE)\to X\subset \PP H^0(\PPE,\OO_\PPE(1))= \PP^r$ is an isomorphism. Otherwise it is a resolution of singularities and since the singularities of $X$ are rational, we can consider $\PPE$ instead of $X$ for most cohomological considerations. \newline
It is furthermore shown, that the Picard group $\Pic(\PPE)$ is generated by the ruling $R=[\pi^*\OO_{\PP^1}(1)]$ and the hyperplane class
$H=[j^*\OO_{\PP^r}(1)]$ with intersection products
$$
H^d=f, \ \ H^{d-1}\cdot R=1, \ \ R^2=0.
$$

\begin{remark}[\cite{Sch}]\label{Xample}
For $a\geq0$ we have
$ H^0(\PPE,\OO_\PPE(aH+bR))\cong H^0(\PP^1,S_a(\cE)(b))$.
Thus we can compute the cohomology of the line bundle $\OO_\PPE(aH+bR)$ explicitly. \newline
 If we denote by 
$\Bbbk[s,t]$ the coordinate ring of $\PP^1$ and by $\varphi_i\in H^0(\PPE,\OO_\PPE(H-e_iR))$ the basic sections, then we can identify sections $\psi\in H^0(\OO_\PPE,\OO_\PPE(aH+bR))$ with homogeneous polynomials \vspace{-2mm}
$$
\psi = \sum_\alpha P_\alpha(s,t)\varphi_1^{\alpha_1}\dots\varphi_d^{\alpha_d}
$$
of degree $a=\alpha_1+\cdots+\alpha_d$ in $\varphi_i$'s and with polynomial coefficients $P_\alpha \in \Bbbk[s,t]$ of degree $\deg P_\alpha= \alpha_1e_1+\cdots+\alpha_de_d+b$. Thus for 
$a\cdot \min\{e_i\}+b\geq -1$ we get
$$
h^0(\PPE, \OO_\PPE(aH+bR))=f\binom{a+d-1}{d}+(b+1)\binom{a+d-1}{d-1}.
$$
In particular $\OO_\PPE(aH+bR)$ is globally generated if $a \geq0$ and $a\cdot\min\{e_i\}\geq 0$.
\end{remark}
Next we want to describe how to resolve line bundles $\OO_\PPE(aH+bR)$ by $\OO_{\PP^r}$-modules.
If we denote by $\Phi$ the $2\times f$ matrix with entries in $H^0(\PPE,\OO_\PPE(H))$  obtained from the multiplication map
$$
H^0(\PPE,\OO_\PPE(R))\otimes H^0(\PPE,\OO_\PPE(H-R))\longrightarrow H^0(\PPE,\OO_\PPE(H)),
$$
then the equations of $X$ are given by the $2\times 2$ minors of $\Phi$. We define 
$$
F:= H^0(\PPE,\OO_\PPE(H-R))\otimes \OO_{\PP^r}= \OO_{\PP^r}^f\ \ \text{and} \ \ 
G:=H^0(\PPE,\OO_\PPE(R))\otimes \OO_{\PP^r}= \OO_{\PP^r}^2
$$
and regard $\Phi$ as a map $\Phi: F(-1)\to G$. For $b\geq -1$, we
consider the \emph{Eagon-Northcott type} complex $\cC$, whose $j^{\text{th}}$ term is defined by
$$
\cC^b_j=\begin{cases}\bigwedge^j F\otimes S_{b-j}G\otimes \OO_\PPr(-j),\ &{\text{for} \ 0 \leq j\leq b}\\ 
\bigwedge^{j+1}F\otimes D_{j-b-1}G^*\otimes  \OO_\PPr(-j-1), &{\text{for}\ j\geq b+1\ \ }\end{cases}
$$
and whose differentials $\delta_j:\cC^b_j\to\cC^b_{j-1}$ are given by the multiplication with $\Phi$ for $j\neq b+1$ and by $\bigwedge^2\Phi$ for $j=b+1.$

\begin{theorem}\label{Sch1.6}
 The Eagon-Northcott type complex 
$\cC^b(a):=\cC^b\otimes \OO_\PPr(a),$ defined above, gives a minimal free resolution of 
$\OO_\PPE(aH+bR)$ as an $\OO_\PPr$-module.
\end{theorem}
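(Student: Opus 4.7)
Since tensoring with $\OO_\PPr(a)$ preserves exactness and minimality, it suffices to treat $a=0$ and prove that $\cC^b$ resolves $\OO_\PPE(bR)$ as an $\OO_\PPr$-module. The statement is classical: $\cC^b$ is the Eagon--Northcott type complex attached to the $2\times f$ matrix $\Phi$, whose ideal of $2\times 2$ minors cuts out $X$ in $\PP^r$ with the maximum possible codimension $f-1$.

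My plan is to verify four things in sequence. First, that $\cC^b$ is indeed a complex: in the ranges $1\leq j\leq b$ and $j\geq b+2$, where both adjacent differentials are $\Phi$-multiplication, the identity $\delta_{j-1}\delta_j=0$ is the standard Koszul-type vanishing; at the transition $j=b+1$, the composition of $\bigwedge^2\Phi$ with $\Phi$ vanishes by a Laplace expansion, since pairing a column of $\Phi$ against a $2\times 2$ minor produces a determinant with a repeated column. Second, exactness in positive homological degree: since $\codim_\PPr X = r-d = f-1$ is maximal for the rank-$1$ locus of a $2\times f$ matrix, the Buchsbaum--Eisenbud acyclicity criterion applies. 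Third, I would identify the zeroth cohomology; for $b\geq 0$ the evaluation $S_b G\otimes \OO_\PPr\to j_*\OO_\PPE(bR)$ is surjective by the cohomology computation of Remark~\ref{Xample}, and one checks that its kernel coincides with the image of $\delta_1$. Fourth, minimality is automatic: entries of $\Phi$ are linear and of $\bigwedge^2\Phi$ quadratic in the coordinates of $\PP^r$, so no unit entries appear.

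For a uniform treatment that also handles the boundary case $b=-1$, the cleanest route is to construct the resolution first on $\PPE$ and then push it down via $j$. Starting from the tautological short exact sequence on the projective bundle $\pi:\PPE\to \PP^1$, one obtains a locally free resolution of $\OO_\PPE(bR)$ in terms of twists of $\OO_\PPE(-jH)$ by pullbacks from $\PP^1$; applying $j_*$ and using the explicit formula of Remark~\ref{Xample} to compute each term recovers exactly the shape of $\cC^b$. The transition at $j=b+1$ corresponds precisely to the point where $R^1\pi_*$ begins to contribute, and Serre duality along the fibers explains both the jump in the exterior factor from $\bigwedge^j F$ to $\bigwedge^{j+1}F$ and the replacement of symmetric by divided powers of $G$.

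The main obstacle I anticipate is the identification in step three. Although Hilbert-series matching via Remark~\ref{Xample} makes the numerical answer unambiguous, a rigorous sheaf-theoretic identification of the cokernel of $\delta_1$ with $\OO_\PPE(bR)$ requires either the relative construction just sketched or a direct local computation in an affine chart of $\PPE$ over the smooth locus of $X$. The boundary case $b=-1$ is the most delicate, since $\cC^{-1}$ consists entirely of divided-power terms and the resolved sheaf has no global sections, so the identification cannot be read off from $H^0$ alone.
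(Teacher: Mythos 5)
The paper offers no argument of its own here---its ``proof'' is the citation to \cite[Section 1]{Sch}---so the real comparison is with Schreyer's original argument. Your first route (check that $\cC^b$ is a complex; deduce exactness from the fact that $X$ has the maximal codimension $f-1$ for the rank-$\leq 1$ locus of a $2\times f$ matrix, via the Buchsbaum--Eisenbud acyclicity criterion, or more precisely via the depth-sensitivity of the family of complexes attached to a map $F\to G$ with $\rank G=2$; identify $\coker\delta_1$; read off minimality from the entries being linear resp.\ quadratic) is essentially the cited proof, and all four steps are sound. Your second route is the geometric (Kempf--Lascoux--Weyman) technique and is genuinely different: it buys a uniform construction including the boundary case $b=-1$ and an a priori identification of the zeroth homology, which you correctly single out as the delicate step of the first route. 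One imprecision there: the Koszul resolution should be taken not on $\PPE$ itself, where $\OO_\PPE(bR)=\pi^*\OO_{\PP^1}(b)$ is already a line bundle and needs no resolving, but on $\PP^1\times\PP^{r}$, where $\PPE$ sits as the zero locus of a regular section of the rank-$f$ bundle $F^*\otimes\bigl(\OO_{\PP^1}(1)\boxtimes\OO_{\PP^{r}}(1)\bigr)$; one twists the Koszul complex of that section by $\OO_{\PP^1}(b)\boxtimes\OO_{\PP^{r}}$ and applies the derived pushforward along the second projection. Then $R^0$ of the fiberwise cohomology yields the symmetric-power terms for $j\leq b$, and $R^1$ together with Serre duality on $\PP^1$ yields the divided-power tail with the homological shift by one, exactly as you describe. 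Neither route is fully carried out---in particular the identification of $\coker\delta_1$ with $j_*\OO_\PPE(bR)$ is only sketched---but both are standard and correct, and the gaps you flag are the right ones and are fillable.
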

\begin{proof}
See \cite[section 1]{Sch}.
\end{proof}

Now let $C \subset \PP^{g-1}$ be a canonically embedded curve of genus $g$ and let further 
$$g^1_k=\{D_\lambda\}_{\lambda \in \PP^1}\subset |D|$$ 
be a pencil of divisors of degree $k$. If we denote by $\overline{D_\lambda}\subset \PP^{g-1}$ the linear span of the divisor, then
$$
X=\bigcup_{\lambda\in \PP^1}\overline{D_\lambda}\subset \PP^{g-1}
$$
 is a $(k-1)$-dimensional rational normal scroll of degree $f=g-k+1$ (see e.g. \cite{EH}). Conversely if $X$ is a rational normal scroll of degree $f$ containing a canonical curve, then the ruling on $X$ cuts out a pencil of divisors $\{D_\lambda\}\subset |D|$ such that $h^0(C,\omega_C\otimes \OO_C(D)^{-1})=f.$ 
 
 \begin{notation}
During the rest of this article $C\subset \PP^{g-1}$ will denote a canonical curve with a basepoint free $g^1_k.$ The variety $X=S(e_1,\dots,e_d)$ is the scroll of degree $f$ and dimension $d=k-1$ defined by this pencil and 
$\PPE$ will denote the $\PP^{d-1}$-bundle corresponding to $X.$ 
\end{notation}

The next important theorem due to Schreyer explains how to obtain a free resolutions of $k$-gonal canonical curves $C\subset \PP^{g-1}$ by an iterated mapping cone construction.

\begin{theorem}\label{Sch4.4}
\begin{enumerate}[i\upshape{)}]
\item
$C\subset \PPE$ has a resolution $F_\bullet$ of type 
$$0\longrightarrow \OO_\PPE(-kH+(f-2)R) \longrightarrow \sum_{j=1}^{\beta_{k-3}}\OO_\PPE(-(k-2)H+a^{(k-3)}_jR)\longrightarrow$$ \vspace{-5mm}
$$\dots\longrightarrow \sum_{j=1}^{\beta_1}\OO_\PPE(-2H+a_j^{(1)}R)\longrightarrow \OO_\PPE \longrightarrow \OO_C\longrightarrow 0\ \ \ \ \ \ \ \ \ \ \ \ $$
with $\beta_i=\frac{i(k-2-i)}{k-1}\binom{k}{i+1}.$ 
\item The complex $F_\bullet$ is self dual, i.e.,
\begin{center}$\cHom(F_\bullet, \OO_\PPE(-kH+(f-2)R))\cong F_\bullet$\end{center}
\item If all $a_k^{(j)}\geq -1,$ then an iterated mapping cone 
$$\left[  \left[ \dots \left [ \cC^{(f-2)}(-k)\longrightarrow \sum_{j=1}^{\beta_{k-3}}\cC^{(a_j^{(k-3)})}(-k+2)\right]\longrightarrow \dots \right]\longrightarrow \cC^0\right]$$
 gives a, not necessarily minimal, resolution of $C$ as an $\OO_{\PP^{g-1}}$-module. 
\end{enumerate}
\end{theorem}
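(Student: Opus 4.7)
The overall strategy follows Schreyer's approach, combining the internal geometry of the fibration $\pi: \PPE \to \PP^1$ with Gorenstein duality for the codimension $k-2$ embedding $C \subset \PPE$.

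For part (i), the plan is to construct $F_\bullet$ by analyzing $C$ fiberwise over $\PP^1$. By geometric Riemann--Roch, a general fiber $D_\lambda$ of $\pi|_C$ is $k$ points in linearly general position in $\overline{D_\lambda}\cong \PP^{k-2}$; such a configuration is arithmetically Gorenstein with $h$-vector $(1,k-2,1)$, and its minimal free resolution is pure of length $k-2$ with Betti numbers exactly $\beta_i = \frac{i(k-2-i)}{k-1}\binom{k}{i+1}$. I would then build the global resolution inductively: start from $\OO_\PPE \twoheadrightarrow \OO_C$, take syzygies, and determine the required twist by $R$ on each summand by pushing forward to $\PP^1$, where $(\pi|_C)_*$ of various $H$-twists yields locally free $\OO_{\PP^1}$-sheaves whose splitting types encode the $a_j^{(i)}$. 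Flatness of $\pi|_C$ (finite of degree $k$) ensures that the ranks $\beta_i$ do not jump on special fibers, so the fiberwise resolution lifts to a genuine global resolution of the stated shape. The integers $a_j^{(i)}$, the \emph{scrollar invariants}, depend on the particular curve and are not pinned down by the theorem.

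Part (ii) will follow from Gorenstein duality. A standard computation gives $\omega_\PPE = \OO_\PPE(-(k-1)H + (f-2)R)$ for a $\PP^{k-2}$-bundle over $\PP^1$ of degree $f$. Since $\omega_C = \OO_C(H)$ by the canonical embedding, adjunction yields $\omega_{C/\PPE} = \OO_C(kH - (f-2)R)$. Dualizing $F_\bullet$ and tensoring by $\OO_\PPE(-kH + (f-2)R)$ therefore produces a resolution of $\OO_C$ of the same shape as $F_\bullet$, with matching outer terms; uniqueness of minimal resolutions then forces the stated isomorphism and, in particular, the symmetry $\beta_i = \beta_{k-3-i}$.

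For part (iii), each summand of $F_\bullet$ has the form $\OO_\PPE(aH + bR)$ with $b \geq -1$ by hypothesis, so Theorem \ref{Sch1.6} provides an $\OO_{\PP^{g-1}}$-resolution $\cC^b(a)$. Starting from the right end of $F_\bullet$ and successively replacing $\OO_\PPE$-summands by their $\cC^b(a)$-resolutions via mapping cones of lifts of the differentials, the resulting total complex is the desired (generally non-minimal) resolution of $\OO_C$ as an $\OO_{\PP^{g-1}}$-module. The main obstacle is clearly part (i): establishing global existence of a resolution of the stated length with precisely the Betti numbers $\beta_i$, and controlling the twists by $R$. Parts (ii) and (iii) are essentially formal once (i) is in hand; the real geometric input --- non-jumping of the ranks on special fibers and identification of the scrollar invariants --- lies entirely in (i).
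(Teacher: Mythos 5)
Your strategy coincides with that of the source the paper relies on for this theorem --- the paper itself offers no argument beyond the citation of Schreyer's Corollary (4.4) and Lemma (4.2) --- namely: fiberwise analysis of the degree-$k$ divisors $D_\lambda$, which by geometric Riemann--Roch and basepoint-freeness of the $g^1_k$ span a $\PP^{k-2}$ and generically form an arithmetically Gorenstein set of $k$ points with $h$-vector $(1,k-2,1)$ and Betti numbers $\beta_i$; Grothendieck splitting of the push-forwards to $\PP^1$ to produce the twists $a_j^{(i)}R$; relative Gorenstein duality with $\omega_{\PPE}=\OO_\PPE(-(k-1)H+(f-2)R)$ and $\omega_C=\OO_C(H)$ for part (ii); and the iterated mapping cone over the Eagon--Northcott type resolutions of Theorem \ref{Sch1.6} for part (iii). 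Parts (ii) and (iii) are fine as sketched.

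The one step that would fail as literally written is the claim that flatness of $\pi|_C$ ensures that ``the ranks $\beta_i$ do not jump on special fibers.'' Flatness fixes the length $k=h^0(\OO_{D_\lambda})$ of every fiber, but not the graded Betti numbers of $D_\lambda$ inside its span: if the $k$ points of a special fiber fail to be in linearly general position (for instance if $k-1$ of them lie in a hyperplane of the fiber $\PP^{k-2}$), their minimal free resolution acquires extra terms and is no longer the pure self-dual resolution with ranks $\beta_i$. The correct assertion, and the one actually needed, is about the global complex rather than the fibers: the sheaves $\pi_*(\cI\,(2H))$, and more generally the push-forwards of the successive syzygy sheaves, are vector bundles on $\PP^1$ whose ranks equal the \emph{generic} fiber's Betti numbers, and the resulting complex $F_\bullet$ of sheaves $\OO_\PPE(-aH)\otimes\pi^*(\text{bundle})$ restricts on \emph{every} fiber to a (possibly non-minimal) resolution of $\OO_{D_\lambda}$; exactness of $F_\bullet$ itself then follows from exactness on the fibers together with local freeness of the terms (or from the acyclicity lemma). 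With that repair --- ranks read off from the generic fiber, exactness checked globally rather than minimality checked fiberwise --- your argument for (i) goes through and matches the cited construction.
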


\begin{proof}
See \cite[Corollary (4.4)]{Sch} and \cite[Lemma (4.2)]{Sch}.
\end{proof}

\begin{remark}\label{5gonrels}
The $a_i^{(k)}$'s in part $i)$ of the theorem above satisfy certain linear equations obtained from the Euler characteristic of the complex $F_\bullet$. 
In particular Schreyer shows (see \cite[Section 6]{Sch}), that for $5$-gonal curves $C$ 
\begin{center}
 $\sum_{i=1}^5 a_i=2g-12 \ \ \text{and}\ \ 
a_i+b_i=f-2,$ 
\end{center}
where $a_i:=a_i^{(1)}$, $b_i:=a_i^{(2)}$ 
and $f=g-4$ is the degree of the rational normal scroll swept out by the $g^1_5$ on $C$. 
\end{remark}

\begin{definition}
We call a partition $(e_1,\dots,e_d)$ \emph{balanced} if $\max_{i,j}|e_i-e_j|\leq 1$.
A rational normal scroll $X=S(e_1,\dots,e_d)$ of dimension $d$ is said to be of \emph{balanced type} if 
$(e_1,\dots,e_d)$ is a balanced partition. \\
With the same notation as in the remark above we say that a $5$-gonal canonical curve 
\emph{satisfies the balancing conditions}
if 
\begin{itemize}
\item the scroll $X$ of dimension $4$ swept out by the $g^1_5$ on $C$ is of balanced type and
\item the partition $(a_1,\dots ,a_5)$ of $2g-12$ is balanced.
\end{itemize}
\end{definition}

\begin{remark}\label{scrollBalanced}
One can show (see e.g. \cite[3.3]{viviani}) that a generic $k$-gonal curve sweps out a scroll of balanced type. The proof uses Ballico's Theorem (see \cite{Bal} ) and the fact that the type of the scroll is uniquely determined by the  $g^1_k$ (see \cite[(2.4)]{Sch}).
\end{remark}

For a section $\Psi: \OO_X(-H+bR)\longrightarrow \OO_X(aR)$ in $H^0(X,\OO_X(H-(b-a)R))$ the induced comparison maps $\rho_\bullet : \cC_\bullet^b(-1)\longrightarrow\cC_\bullet^a$ are determined by $\Psi$ up to homotopy. 
For example
\begin{center}$
\Hom(\cC^b_{a+1}(-1),\cC^a_{a+2})=\Hom(\cC^b_a(-1),\cC^a_{a+1})=0
$\end{center}
by degree reasons, and therefore the $(a+1)^{st}-$comparison map $\rho_{a+1}$ is uniquely determined by $\Psi$ (not only up to homotopy).
The following lemma is due to Martens and Schreyer.

\begin{lemma}\label{remMartens}\label{martens}
If $\Hom(\cC^b_{j}(-1),\cC^a_{j+1})=\Hom(\cC^b_{j-1}(-1),\cC^a_{j})=0$, then  
the $j^{\text{th}}$-comparison map 
$\rho_j:\cC^b_j\to\cC^a_j$ is given (up to a scalar factor) by the composition 
\begin{small}
$$
\rho_j: \bigwedge^jF\otimes S_{b-j}G\longrightarrow \bigwedge^jF\otimes S_{b-j}G\otimes S_{j-a-1}G\otimes D_{j-a-1}G^*
$$ \vspace{-4mm}$$
 \xlongrightarrow{id\otimes mult\otimes id}\bigwedge^j F\otimes S_{b-a-1}G\otimes D_{j-a-1}G^* 
\xrightarrow{id\otimes\Psi\otimes id}\bigwedge^{j}F\otimes F\otimes D_{j-a-1}G^*
$$ \vspace{-4mm} $$
 \xrightarrow{\wedge\otimes id}\bigwedge^{j+1}F\otimes D_{j-a-1}G^*.
$$
\end{small}
\end{lemma}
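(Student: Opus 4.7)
The plan is to prove the lemma in two steps: first establish that the vanishing hypotheses pin down the comparison map $\rho_j$ independently of any homotopy choices, and then exhibit the specified composition as a natural candidate of the required form, so that the two must agree up to scalar.

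For the uniqueness, I would invoke the standard fact that any two chain lifts of $\Psi:\OO_\PPE(-H+bR)\to \OO_\PPE(aR)$ to a morphism of complexes $\cC^b_\bullet(-1) \to \cC^a_\bullet$ differ by a chain homotopy $h_\bullet$. Such a homotopy alters the component $\rho_j$ by $d^a_{j+1}\circ h_j + h_{j-1}\circ d^b_j$, where $h_i \in \Hom(\cC^b_i(-1),\cC^a_{i+1})$. Since by assumption $\Hom(\cC^b_j(-1),\cC^a_{j+1})=0$ and $\Hom(\cC^b_{j-1}(-1),\cC^a_j)=0$, both $h_j$ and $h_{j-1}$ vanish, so $\rho_j$ is uniquely determined once $\Psi$ is fixed.

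Next I would produce the explicit candidate by assembling the four natural ingredients appearing in the statement: the canonical adjunction $S_{b-j}G \to S_{b-j}G \otimes S_{j-a-1}G \otimes D_{j-a-1}G^*$ coming from the perfect pairing between $S_{j-a-1}G$ and $D_{j-a-1}G^*$; the multiplication $S_{b-j}G \otimes S_{j-a-1}G \to S_{b-a-1}G$ in $S_\bullet G$; the map $S_{b-a-1}G \to F$ induced by viewing $\Psi$ as the multiplication operator $H^0(\OO_\PPE(H-(b-a)R))\otimes H^0(\OO_\PPE((b-a-1)R))\to H^0(\OO_\PPE(H-R))= F$; and the wedge product $\bigwedge^j F \otimes F \to \bigwedge^{j+1}F$. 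Each step is $\OO_\PPr$-linear of the correct bidegree, the composition is nonzero, and it depends linearly and functorially on $\Psi$. The uniqueness established above then forces it to coincide with the actual comparison map at position $j$ up to a nonzero scalar.

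The step I expect to be the main obstacle is verifying that this naturally defined composition is genuinely compatible with the Eagon-Northcott differentials, so that it extends to a bona fide chain map rather than merely being proportional to $\rho_j$ a posteriori. The delicate index is $j=a+1$, where the target $\cC^a_j$ sits at the junction between the Koszul part and the divided-power part of $\cC^a_\bullet$ and the differential $\bigwedge^2\Phi$ enters the picture. I would handle this by explicitly checking the two commutative squares adjacent to position $j$ using the defining formulas for the Eagon-Northcott differentials, reducing compatibility to the formal identity that expresses multiplication of $\Psi$ with the entries of $\Phi$ inside the section ring of $\PPE$.
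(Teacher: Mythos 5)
Your strategy is sound and is essentially the argument the paper outsources: the paper's ``proof'' is a one-line citation of the appendix of Martens--Schreyer, where exactly this two-step scheme (homotopy uniqueness plus an explicit chain lift) is carried out for the $(a+1)^{\text{st}}$ map. Your uniqueness half is complete and correct: two lifts of $\Psi$ differ by $d^a_{j+1}h_j+h_{j-1}d^b_j$ with $h_j\in\Hom(\cC^b_j(-1),\cC^a_{j+1})$ and $h_{j-1}\in\Hom(\cC^b_{j-1}(-1),\cC^a_j)$, and both spaces vanish by hypothesis. You have also identified the right candidate, including the correct interpretation of $\Psi$ as a map $S_{b-a-1}G\to F$ via multiplication of sections on $\PPE$.

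The one soft spot is the second half, which you correctly flag as the real content but whose proposed execution is slightly too local. Verifying ``the two commutative squares adjacent to position $j$'' does not by itself let you invoke the homotopy uniqueness: that argument compares two \emph{complete} chain lifts of $\Psi$, so you must exhibit the displayed composition as the degree-$j$ component of a full morphism of complexes $\cC^b_\bullet(-1)\to\cC^a_\bullet$ covering $\Psi$ in degree $0$ (or at least a compatible system extending down to degree $0$). Checking two squares in isolation presupposes candidates for $\rho_{j\pm1}$ that are themselves already known to belong to such a lift. The standard remedy, and what [MS] actually do, is to write the analogous formula in every homological degree (with the appropriate variants in the Koszul range $i\le a$, at the junction $i=a+1$ where $\bigwedge^2\Phi$ appears, and in the divided-power range) and verify commutativity throughout; the scalar in the statement then absorbs the combinatorial constants coming from the symmetric/divided-power pairings. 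With that replacement your argument is complete.
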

\begin{proof}
In \cite{MS}[appendix] this is shown for the $(a+1)^{st}$ comparism map but the proof immediately generalizes to our situation.
\end{proof}

\section{Proof of the Main Theorem}\label{pmthm}
The aim of this section is to prove Theorem \ref{mthm} and then to show that a general $5$-gonal curve satisfies the balancing conditions.

\begin{theorem}\label{mthm}
Let $C\subset \PP^{g-1}$ be a general $5$-gonal curve of genus $g$ satisfying the balancing conditions. Let further $X$ be the scroll swept out by the $g^1_5$ and $n=\lceil\frac{g-1}{2}\rceil$, then 
\begin{center}
$\beta_{n,n+1}(C)>\beta_{n,n+1}(X)\  \text{for odd}\  g\geq 13\  \text{and even}\  g\geq 28$.
\end{center} 
\end{theorem}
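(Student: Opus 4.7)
My plan is to apply Theorem~\ref{Sch4.4} to the self-dual length-three $\OO_\PPE$-resolution
$$
F_\bullet\colon\ 0\to\OO_\PPE(-5H+(f-2)R)
\to\bigoplus_{j=1}^{5}\OO_\PPE(-3H+b_jR)
\to\bigoplus_{j=1}^{5}\OO_\PPE(-2H+a_jR)
\to\OO_\PPE\to 0
$$
of $\OO_C$, resolve each term over $\OO_{\PP^{g-1}}$ by the Eagon-Northcott-type complex of Theorem~\ref{Sch1.6}, and form the iterated mapping cone; this produces a (generally non-minimal) resolution of $\OO_C$ on $\PP^{g-1}$. As a first step I would locate the summands of this cone in bidegree $(n,n+1)$ by matching internal position with the mapping-cone shift. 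Exactly two families survive: the scroll's term $\cC^{0}_{n}$, contributing $n\binom{f}{n+1}=\beta_{n,n+1}(X)$, and, for each $j$ with $b_j\geq n-2$, the unflipped summand $\bigwedge^{n-2}F\otimes S_{b_j-n+2}G$ of $\cC^{b_j}(-3)$, which lands at total position $n$ after the shift by two. The linear constraints in Remark~\ref{5gonrels} together with the balancing hypothesis force $b_j\geq n-2$ for every $j$ in the stated range of $g$, so this second family carries a positive surplus over $\beta_{n,n+1}(X)$.

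Next I would isolate the unique cross-differential of the cone capable of trimming these extra summands. A direct degree and position check shows the only candidate is the comparison $\cC^{b_j}(-3)\to\cC^{a_l}(-2)$ induced by the $(j,l)$-entry $\Psi_{j,l}\in H^{0}(\OO_\PPE(H+(a_l-b_j)R))$ of the linear matrix representing $F_2\to F_1$: it pairs the unflipped $\cC^{b_j}_{n-2}(-3)$ at total position $n$ against the flipped part $\bigwedge^{n-1}F\otimes D_{n-3-a_l}G^{*}$ of $\cC^{a_l}_{n-2}(-2)$ at total position $n-1$. Assembling the blocks into the global comparison $M:=\bigl(\rho^{(j,l)}_{n-2}\bigr)_{j,l}$ and using the dimensions in Remark~\ref{Xample} together with Remark~\ref{5gonrels}, one reduces the theorem to the statement that $\ker M\neq 0$ for $g\geq 13$ (odd) and $g\geq 28$ (even); the uncancelled source of $M$ then yields exactly the extra $\beta_{n,n+1}$.

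The rank estimate on $M$ is what I expect to be the main obstacle. Lemma~\ref{remMartens} applies at internal position $n-2$---the $\Hom$-vanishings required follow by degree from the balancing conditions---and it presents each block of $M$ as the explicit composition of a Cauchy-type diagonal, the multiplication $S_{b_j-n+2}G\otimes S_{n-3-a_l}G\to S_{b_j-a_l-1}G$, multiplication by $\Psi_{j,l}$ into $F$, and the wedge $\bigwedge^{n-2}F\otimes F\to\bigwedge^{n-1}F$. I would estimate $\rank M$ by tracking this factorization through the finite-dimensional image of the $5\times 5$ matrix $(\Psi_{j,l})$ acting on the relevant symmetric powers of $G$, re-expressing every dimension via Remark~\ref{5gonrels} as an explicit polynomial in $g$, and then verifying that the resulting strict inequality $\rank M<\dim\text{source}(M)$ is attained precisely when $g\geq 13$ is odd or $g\geq 28$ is even. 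The non-trivial kernel of $M$ then supplies the desired extra syzygies of $C$ in bidegree $(n,n+1)$, establishing $\beta_{n,n+1}(C)>\beta_{n,n+1}(X)$.
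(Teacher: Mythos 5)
Your reduction is exactly the paper's: you correctly locate the two families of summands in bidegree $(n,n+1)$ of the iterated mapping cone, observe that under the balancing conditions the source $\sum_j\cC^{b_j}_{n-2}(-3)$ is entirely unflipped and the target $\sum_l\cC^{a_l}_{n-2}(-2)$ entirely flipped (so the comparison matrix $M=\psi_{n-2}$ has scalar entries), and reduce the theorem to $\ker M\neq 0$. The gap is in the final step, which is where all the content of the proof lives. Your plan is to ``estimate $\rank M$'' by a dimension count and verify the strict inequality $\rank M<\dim\mathrm{source}(M)$ as a polynomial inequality in $g$. This cannot work as stated: for odd genus the source and target of $M$ have \emph{equal} rank (the paper notes this explicitly), so no comparison of dimensions can force a kernel --- a generic scalar matrix of that shape would be an isomorphism. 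One must exhibit an actual nonzero kernel vector, exploiting the two non-generic structural features of $M$: the skew-symmetry of $\Psi$ (which makes the diagonal block $\Psi_{(jj)}$, hence the corresponding block of $M$, vanish) and the non-injectivity of the multiplication $S_{b_j-n+2}G\otimes S_{n-3-a_l}G\to S_{b_j-a_l-1}G$ through which each block of $M$ factors by Lemma~\ref{remMartens}.

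Concretely, the paper fixes (say, in type II) $g\in S_{n-a-2}G$, a basis $g'_1,\dots,g'_{n-a-2}$ of $S_{n-a-3}G$, collects the $4(n-a-2)$ vectors $f^{(i)}_1,\dots,f^{(i)}_4\in F$ arising as the components of $\Psi_{(41)}(g\cdot g'_i)$, and wedges all of them into a single decomposable $f\in\bigwedge^{n-2}F$; then $(f\otimes g,0,0,0,0)$ is killed by every block of the relevant column of $M$ simultaneously. Note that killing one block at a time is not enough --- a kernel element of a single multiplication map need not survive the other four blocks --- which is why the simultaneous wedge construction is needed. This is also where the genus bounds actually come from: the construction requires $n-2\geq 4\dim S_{n-a-3}G$, i.e.\ $5r-1\geq 4r$, together with $r\geq 1$ in the type classification and $\min\{b_i\}\geq n-2\geq\max\{a_i\}$; the bounds $g\geq 13$ (odd) and $g\geq 28$ (even) are not the locus where a dimension inequality between source and target of $M$ becomes strict. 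Without this explicit kernel construction (or some substitute argument that $M$ is singular), your proposal does not yet prove the theorem.
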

Throughout this section, $C\subset \PP^{g-1}$ will be a $5$-gonal canonical curve of genus $g$.
In this case $X$ is a $d=4$ dimensional rational normal scroll of degree $f=g-4$. 
Recall from \ref{Sch4.4} and \ref{5gonrels} that $C\subset \PPE$ has a resolution of the form
$$
\OO_\PPE(-5H+(f-2)R)\to\sum_{i=1}^5\OO_\PPE(-3H+b_iR)
\xrightarrow{\Psi}\sum_{i=1}^5\OO_\PPE(-2H+a_iR)\to\OO_\PPE\to\OO_C
$$
where $\sum_{i=1}^5a_i=2g-12,$  $a_i+b_i=f-2$. \\
The matrix $\Psi$ is skew symmetric by the structure theorem for Gorenstein ideal in codimension $3$ and the $5$ Pfaffians of $\Psi$ generate the ideal of $C$ (see \cite{BE3}). \newline
As in Theorem \ref{Sch1.6}, we denote by 
$F=H^0(\PPE,\OO_\PPE(H-R))\otimes \OO_{\PP^{g-1}}\cong \OO_{\PP^{g-1}}^f$ and by $G=H^0(\PPE,\OO_\PPE(R))\otimes \OO_{\PP^{g-1}}\cong \OO_{\PP^{g-1}}^2$. By abuse of notation, we will also refer to the vector spaces 
$H^0(\PPE,\OO_\PPE(H-R))$ and $H^0(\PPE,\OO_\PPE(R))$ by $F$ and $G$, respectively. \\
Resolving the $\OO_\PPE$-modules occurring in the minimal resolution of $C\subset \PPE$, we get 
\begin{center}
\hfil
\begin{small}
\begin{xy}
\xymatrix@R=4mm{
\sum_{i=1}^5\OO_\PPE(-3H+b_iR)\ar[r]^{\Psi\ \ \ } &\sum_{i=1}^5\OO_\PPE(-2H+b_iR)\ \ \ \ \ \ \ \ \  \\
\vdots\ar[u] &\vdots \ar[u] \\
\sum_{i=1}^5\cC^{b_i}_{n-2}(-3)\ar[u]\ar[r]^{\psi_{n-2}} &
\sum_{i=1}^5\cC^{a_i}_{n-2}(-2) \ar[u] \\
\sum_{i=1}^5\cC^{b_i}_{n-1}(-3)\ar[u]\ar[r]^{\psi_{n-1}}&
\sum_{i=1}^5\cC^{a_i}_{n-1}(-2) \ar[u]\\
\vdots \ar[u]& \vdots\ar[u]
}
\end{xy}
\end{small}
\hfil
\end{center}

The Betti number $\beta_{n,n+1}(X)$ is given by $\rank(\cC^0_n)=n\cdot\binom{f}{n+1}$ and
the rank of the $(n-2)^{\text{th}}$ comparison map 
$$
\psi:=\psi_{n-2}:\sum_{i=1}^5\cC^{b_i}_{n-2}(-3)\longrightarrow \sum_{i=1}^5\cC^{a_i}_{n-2}(-2)
$$
in the diagram above determines the Betti number $\beta_{n,n+1}(C)$. \newline
We always have $\rank(\sum_{i=1}^5\cC^{b_i}_{n-2})\leq\rank(\sum_{i=1}^5\cC^{a_i}_{n-2})$ (where equality holds for odd genus). \newpage
 Furthermore 
 \begin{center}$ \text{min}\{b_i\}\geq n-2\geq \text{max}\{a_i\}$\end{center}
holds for curves of genus odd $g=2n+1\geq 13$ (or even genus $g=2n\geq 28$)  satisfying the balancing conditions and therefore
$$
\cC^{b_i}_{n-2}(-3)=\bigwedge^{n-2}F\otimes S_{b_i-n+2}G(-n-1)\ \  \text{and} \ \ 
\cC^{a_i}_{n-2}(-2)=\bigwedge^{n-1}F\otimes D_{n-a_i-3}G^*(-n-1).
$$
This means in particular, that the map $\psi$ has entries in the field $\Bbbk$ and therefore
$$
\beta_{n,n+1}(C)=\beta_{n,n+1}(X)+\dim\ker(\psi).
$$

We restrict ourselves to curves of odd genus since the theorem is proved in exactly the same way for even genus.
For curves of odd genus $g=2n+1$, we distinguish $5$ types of curves satisfying the balancing conditions. 
These types depend on the congruence class of $n$ modulo $5$, i.e., on the block structure of the skew symmetric matrix $\Psi$. 
\begin{description}
\item[\rm\bf Type I] $(a_1,\dots,a_5)=(a,a,a,a,a),(b_1,\dots,b_5)=(b,b,b,b,b) \Leftrightarrow$
$a=4r+2$, $b=6r+3$ and $n=5r+5$ for $r\geq1.$
\item[\rm\bf Type II] $(a_1,\dots,a_5)=(a-1,a,a,a,a),(b_1,\dots,b_5)=(b+1,b,b,b,b) \Leftrightarrow$ $a=4r-1$, $b=6r-2$ and $n=5r+1$ for $r\geq1.$
\item[\rm\bf Type III] $(a_1,\dots,a_5)=(a-1,a-1,a,a,a),(b_1,\dots,b_5)=(b+1,b+1,b,b,b) \Leftrightarrow$
 $a=4r$, $b=6r-1$ and $n=5r+2$ for $r\geq1.$
\item[\rm\bf Type IV] $(a_1,\dots,a_5)=(a,a,a,a+1,a+1),(b_1,\dots,b_5)=(b,b,b,b-1,b-1) \Leftrightarrow$
$a=4r$, $b=6r+1$ and $n=5r+3$ for $r\geq1.$
\item[\rm\bf Type V] $(a_1,\dots,a_5)=(a,a,a,a,a+1),(b_1,\dots,b_5)=(b,b,b,b,b-1) \Leftrightarrow$ $a=4r+1$, $b=6r+2$ and $n=5r+4$ for $r\geq1.$
\end{description}
\begin{proof}[Proof of Theorem \ref{mthm}]
\ \\
Since the proof of the theorem is similar for all different types above, we will only carry out the proof for curves of type II (leaving the other cases to the reader).  \newline
We show that the map \vspace{-2mm}
$$
\psi:\bigg( \cC^{(b+1)}_{n-2}\oplus\sum_{i=1}^4\cC^b_{n-2}\bigg)(-3)\longrightarrow 
\bigg(\cC^{(a-1)}_{n-2}\oplus \sum_{i=1}^4\cC^a_{n-2}\bigg)(-2)
$$
induced by the skew-symmetric matrix 
$$\Psi:
\overset{\OO_\PPE(-3H+(b+1)R)}{\underset{\OO_\PPE(-3H+bR)^{\oplus 4}}{\oplus}}
\longrightarrow
\overset{\OO_\PPE(-2H+(a-1)R)}{\underset{\OO_\PPE(-2H+aR)^{\oplus 4}}{\oplus}}\ .
$$
has a non-trivial decomposable element in the kernel.
Note that the map 
$$\Psi_{(11)}:\OO_\PPE(-3H+(b+1)R)\longrightarrow\OO_\PPE(-3H+(a-1)R)$$
is zero by the skew-symmetry of $\Psi.$
Thus it is sufficient to find an element in the kernel the map
$\psi_{(41)}: \cC^{(b+1)}_{n-2}(-3)\longrightarrow
\sum_{i=1}^4\cC^{(a)}_{n-2}(-2)$,
 induced by the submatrix 
$$\Psi_{(41)}:\OO_\PPE(-3H+(b+1)R) \longrightarrow \OO_\PPE(-2H+aR)^{\oplus 4}$$
of the matrix $\Psi$.
By Lemma \ref{remMartens}, the map $\psi_{(41)}$ is uniquely determined and is given as the composition
$$
\cC_{n-2}^{b+1}(-3)=\bigwedge^{n-2}F\otimes S_{b-n+3}G=\bigwedge^{n-2}F\otimes S_{n-a-2}G \hookrightarrow 
\bigwedge^{n-2}F\otimes S_{n-a-2}G\otimes S_{n-a-3}G\otimes D_{n-a-3}G^*
$$ \vspace{-4mm}
$$
\longrightarrow  \bigwedge^{n-2}F\otimes S_{2n-2a-5}G\otimes D_{n-a-3}G^* 
\xlongrightarrow{id\otimes \Psi_{(41)}\otimes id} \bigwedge^{n-2}F\otimes F^{\oplus 4}\otimes D_{n-a-3}G^* 
$$ \vspace{-4mm}
$$
\xlongrightarrow{\wedge\otimes id} \bigg(\bigwedge^{n-1}F\bigg)^{\oplus 4}\otimes D_{n-a-3}G^*= \sum_{i=1}^4\cC_{n-2}^a(-2)\ .
$$
Since the multiplication map  $S_{n-a-2}G\otimes S_{n-a-3}G\longrightarrow S_{2n-2a-5}G$ is not injective, we show that the existence of an $f\in \bigwedge^{n-2}F$ and a $g\in S_{n-a-2}G$ such that $f\wedge \Psi_{(41)}(g\cdot g')=0$ for all $g'\in S_{n-a-3}G$.\newline
To this end, let $g\in S_{n-a-2}G$ be an arbitrary element and let $\{g'_1,\dots, g'_{n-a-2}\}$ be a basis of $S_{n-a-3}G$.
For $i=1,\dots, (n-a-2)$, we define 
$$
(f_1^{(i)},f_2^{(i)},f_3^{(i)},f_4^{(i)})^t:=\Psi_{(41)}(g\cdot g'_i)\in F^4.
$$
Note that the set $\{f_j^{(i)}\}\subset F$ is a linearly independent subset of $F$
 since $C$, and therefore $\Psi$ is general.
Thus, since 
$$
n-2=5r-1\geq 4\cdot \dim_\Bbbk(S_{n-a-3}G)=4(n-a-2)=4r
$$
holds for all $r\geq 1$, we find a nonzero element $f$ of the form
$$
f=f_1^{(1)}\wedge f_1^{(2)}\wedge \dots \wedge f_{n-a-2}^{(3)}\wedge f_{n-a-2}^{(4)}\wedge
\tilde{f} \in \bigwedge^{n-2}F
$$
for some $\tilde{f}\in \bigwedge^{r-1}F$. By construction $(f\otimes g,0,0,0,0)^t$ lies in the kernel of $\psi$.
\end{proof}

We can immediately generalize Theorem \ref{mthm}.
\begin{theorem}
Let $C\subset \PP^{g-1}$ be a general $5$-gonal canonical curve satisfying the balancing conditions. Then
$$
\beta_{n+c,n+c+1}(C)>\beta_{n+c,n+c+1}(X) \ \ \text{for odd genus}\ g=2n+1\geq 30c+13
$$
$$
\beta_{n+c,n+c+1}(C)>\beta_{n+c,n+c+1}(X) \ \ \text{for even genus}\ g=2n\geq 30c+28.
$$
\end{theorem}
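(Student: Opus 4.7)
The plan is to mimic the proof of Theorem \ref{mthm} line by line, replacing $n$ by $n+c$ throughout, and to track where the genus bound $g\ge 30c+13$ (respectively $g\ge 30c+28$) enters. Concretely, I would study the $(n+c-2)^{\text{th}}$ comparison map
$$\psi:=\psi_{n+c-2}\colon\sum_{i=1}^{5}\cC^{b_i}_{n+c-2}(-3)\longrightarrow\sum_{i=1}^{5}\cC^{a_i}_{n+c-2}(-2)$$
arising in the iterated mapping cone of Theorem \ref{Sch4.4}. Since $\beta_{n+c,n+c+1}(X)=\rank\cC^0_{n+c}$, any non-trivial element of $\ker\psi$ produces an additional generator, giving $\beta_{n+c,n+c+1}(C)=\beta_{n+c,n+c+1}(X)+\dim\ker\psi$.

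The first step is to verify, under the balancing conditions together with the genus hypothesis, that $\min\{b_i\}\ge n+c-2\ge \max\{a_i\}$ still holds, so that
$$\cC^{b_i}_{n+c-2}(-3)=\bigwedge^{n+c-2}F\otimes S_{b_i-n-c+2}G,\quad \cC^{a_i}_{n+c-2}(-2)=\bigwedge^{n+c-1}F\otimes D_{n+c-a_i-3}G^*.$$
Consequently $\psi$ has entries in $\Bbbk$, and each of its blocks is described explicitly by Lemma \ref{martens}. This is a routine case-by-case bookkeeping check over the five types I--V and their even-genus analogues.

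The second step is the decomposable-kernel construction. Working in type II for concreteness (the remaining cases being analogous), I pick an arbitrary $g\in S_{r-c}G$, a basis $\{g'_1,\dots,g'_{r+c}\}$ of $S_{r+c-1}G$, and set $(f_1^{(i)},\dots,f_4^{(i)})^t:=\Psi_{(41)}(g\cdot g'_i)\in F^{\oplus 4}$. Genericity of $\Psi$ makes the $4(r+c)$ vectors $f_j^{(i)}\in F$ linearly independent, and the inequality $n+c-2\ge 4(r+c)$---equivalent to $r\ge 3c+1$, i.e., $g=2n+1\ge 30c+13$---lets me complete them to
$$f=f_1^{(1)}\wedge\dots\wedge f_4^{(r+c)}\wedge\tilde f\in\bigwedge^{n+c-2}F$$
for some $\tilde f\in\bigwedge^{r-1-3c}F$. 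As in the proof of Theorem \ref{mthm}, $(f\otimes g,0,0,0,0)^t$ then lies in $\ker\psi$.

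The main obstacle I expect is not conceptual but combinatorial: one has to repeat this verification in the remaining four types, including the even-genus versions, whose slightly asymmetric partitions shift the wedge-power inequality by one and produce the stricter bound $g\ge 30c+28$. A uniform formulation isolating the two governing inequalities---(a) $a_i\le n+c-2\le b_i$ for every $i$, and (b) $n+c-2\ge 4\cdot\dim_\Bbbk S_{(n+c-2)-a-1}G$ for the relevant off-diagonal block---would shorten the write-up, but the underlying mechanism (non-injectivity of $S_{r-c}G\otimes S_{r+c-1}G\to S_{2r-1}G$ together with enough room in $\bigwedge^{n+c-2}F$ to absorb the image vectors) is identical to the case $c=0$.
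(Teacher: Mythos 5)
Your proposal is correct and follows essentially the same route as the paper, which likewise reduces the statement to finding decomposable elements in the kernel of the shifted comparison map $\psi_{n+c-2}$ via the construction from Theorem \ref{mthm}, with the wedge-power inequality $n+c-2\geq 4(r+c)$ yielding $r\geq 3c+1$ (odd genus) and the analogous shifted count yielding $r\geq 3c+3$ (even genus). In fact your write-up supplies more of the bookkeeping (the verification of $\min\{b_i\}\geq n+c-2\geq\max\{a_i\}$ and the explicit degrees $S_{r-c}G$, $S_{r+c-1}G$) than the paper's own two-line proof does.
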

\begin{proof}
The Betti number $\beta_{n+c,n+c+1}(C)$ is determined by the rank of the map
$$\psi_{n-2+c}:\sum_{i=1}^5\cC^{b_i}_{n-2+c}(-3)\to \sum_{i=1}^5\cC^{a_i}_{n-2+c}(-2)$$
and $\rank(\sum_{i=1}^5\cC^{b_i}_{n-2+c})\leq \rank( \sum_{i=1}^5\cC^{a_i}_{n-2+c})$.
By the same construction as in the proof of Theorem \ref{mthm}, we find decomposable elements in the kernel of $\psi_{n-2+c}$ if $r\geq 3c+1$ (for odd genus) or $r\geq 3c+3$ (for even genus).
\end{proof}

Next we want to show that the generic $5$-gonal curve satisfies the balancing conditions. 
The balancing conditions on a $5$-gonal canonical curve are open conditions.
It is therefore sufficient to find an example for each $g$.

\begin{proposition}\label{existBalanced}
For any odd $g\geq 13$ (and even $g\geq 28$), there exists a smooth and irreducible $5$-gonal canonical genus $g$ curve satisfying the balancing conditions. 
\end{proposition}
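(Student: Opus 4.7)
The plan is as follows. Since each of the two balancing conditions is open on the Hurwitz scheme $\mathcal{H}_{g,5}$, it suffices to exhibit, for each $g$ in the stated range, a single smooth irreducible 5-gonal canonical curve $C\subset \PP^{g-1}$ satisfying them. Remark \ref{scrollBalanced} already guarantees the balancedness of the scroll $X$ swept out by the $g^1_5$ for a generic curve, so the task reduces to producing a single $C$ whose invariants $(a_1,\ldots,a_5)$ form a balanced partition of $2g-12$.

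The natural construction reverses the analysis of Section \ref{pmthm}. Fix a balanced scroll $X=S(e_1,\ldots,e_4)$ of degree $f=g-4$ in $\PP^{g-1}$, choose the balanced partition $(a_1,\ldots,a_5)$ of $2g-12$, and set $b_i=f-2-a_i$. By the Buchsbaum--Eisenbud structure theorem for Gorenstein ideals of codimension $3$, any such curve inside $\PPE$ is cut out by the maximal Pfaffians of a skew-symmetric matrix $\Psi=(\psi_{ij})$ with $\psi_{ij}\in H^0(\PPE,\OO_\PPE(H-(b_i-a_j)R))$. Conversely, for a sufficiently general such $\Psi$ the Pfaffian locus $C_\Psi\subset \PPE$ is expected to be a smooth irreducible 5-gonal canonical curve of genus $g$ with exactly the prescribed invariants, which makes this construction a natural candidate.

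I would verify this by a computer algebra computation, for instance in Macaulay2 over a finite field: pick one genus $g$ in each congruence class modulo $5$ (corresponding to Types I--V) at the smallest relevant value, generate a random $\Psi$ of the prescribed bidegree pattern, and check that the resulting Pfaffian scheme is smooth, irreducible, of the correct Hilbert polynomial, and that the ruling of $\PPE$ pulls back to a $g^1_5$ on $C_\Psi$. Standard semicontinuity arguments then transfer smoothness and irreducibility to characteristic zero, and an induction on $r$ (where $n=5r+\varepsilon$) extends the construction to all larger $g$ within each type by simply enlarging the bidegrees of the entries of $\Psi$.

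The main obstacle is certifying smoothness and irreducibility for a random $\Psi$: the skew-symmetric bidegree constraints cut out a comparatively small parameter space in which degenerate behaviour (reducible Pfaffian locus, singular points along rulings of $\PPE$, or unwanted jumps in the spread of the $a_i$) is a priori possible. In practice a single explicit random example in each type is sufficient, but a conceptual proof would require a parameter count comparing the space of admissible $\Psi$, modulo the obvious automorphism actions, to $\dim \mathcal{H}_{g,5}=2g+1$.
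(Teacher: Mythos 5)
Your construction is exactly the paper's: fix a balanced scroll, fix the balanced partitions $(a_i)$, $(b_i)$, and realize $C$ as the Pfaffian locus of a general skew-symmetric $\Psi$ with the prescribed twists, invoking the Buchsbaum--Eisenbud structure theorem. The gap is in the verification. Your plan is to certify one random example per type in Macaulay2 and then claim that ``an induction on $r$ \dots extends the construction to all larger $g$ within each type by simply enlarging the bidegrees.'' No such induction exists: smoothness of the Pfaffian locus for one value of $g$ says nothing about larger $g$ in the same congruence class, since the scroll, the bundle $\cF$, and the bidegree pattern of $\Psi$ all change, and there is no specialization or degeneration relating the two situations. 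You flag this yourself (``a conceptual proof would require a parameter count''), but the missing argument is precisely the content of the proposition, so as written the proof does not close.

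The paper fills this gap with a Bertini-type theorem for Pfaffian degeneracy loci of skew-symmetric morphisms $\Psi:\cF\to\cF^*\otimes\cL$ (citing Ottaviani): if $\bigwedge^2\cF^*\otimes\cL$ is globally generated, then for general $\Psi$ the scheme $\Pf(\Psi)$ is smooth of codimension $3$ or empty. Global generation reduces, via Remark \ref{Xample}, to the single numerical inequality $\min\{e_i\}\geq b-a+1$, which holds uniformly for all $g$ in the stated range with the one exception $g=15$ (where the paper does resort to a Macaulay2 check, the only place a computation is needed). Non-emptiness and irreducibility, which your proposal also leaves to the computer, follow conceptually: the iterated mapping cone shows $C$ is arithmetically Cohen--Macaulay and nonempty, and the surjection $H^0(\PP^{g-1},\OO_{\PP^{g-1}})\to H^0(C,\OO_C)$ forces connectedness, hence irreducibility once smoothness is known. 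If you want to salvage your approach, replace the ``induction on $r$'' with this global generation criterion; a parameter count comparing to $\dim\mathcal{H}_{g,5}$ is not needed.
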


\begin{proof}
We illustrate  the proof for odd genus curves of type II:\newline
To this end let $X=S(e_1,\dots,e_4)\cong \PPE$ with $e_1\geq\dots\geq e_4$ be a fixed $4$-dimensional rational normal scroll of balanced type and of degree $f=g-4$. 
Let further 
$$
(a_1,\dots,a_5)=(a-1,a,a,a,a)\ \  \text{and} \ \  (b_1,\dots,b_5)=(b+1,b,b,b,b)
$$
be balanced partitions such that $\sum_{i=1}^5 a_i=2g-12$, $g=2n+1$, $n=5r+1$, $a=4r-1$ and $a_i+b_i=g-6$.
\newline
We consider a general skew-symmetric morphism 
\begin{large}
$$
\underbrace{\overset{\OO_\PPE(-3H+(b+1)R)}{\underset{\OO_\PPE(-3H+bR)^{\oplus 4}}{\oplus}}}_{=: \cF}
 \xlongrightarrow{\Psi}
\underbrace{\overset{\OO_\PPE(-2H+(a-1)R)}{\underset{\OO_\PPE(-2H+aR)^{\oplus 4}}{\oplus}}}_{=:\cF^*\otimes \cL}
$$
\end{large}
If $\bigwedge^2\cF^*\otimes \cL$ is globally generated, then it follows by a Bertini type theorem (see e.g. \cite{ottaviani}) that the scheme $\Pf(\Psi)$ cut out by the Pfaffians of $\Psi$ is smooth of codimension $3$ or empty. \newline
Recall from Remark \ref{Xample}, that a line bundle $\OO_\PPE(H+cR)$ is globally generated if 
$e_4+c\geq 0$. Thus since for $r\geq 1$
$$
\min\{e_i\}=\left\lfloor \frac{g-4}{4}\right\rfloor = \left\lfloor \frac{10r-1}{4}\right\rfloor = 2r+\left\lfloor \frac{2r-1}{4}\right\rfloor  \geq (b-a+1)=2r,
$$
we conclude that in this case
$$
\bigwedge^2\cF^*\otimes \cL=\OO_\PPE(H-(b-a+1)R)^{\oplus 4}\oplus\OO_\PPE(H-(b-a)R)^{\oplus 6}
$$
is globally generated. It follows that $C=\Pf(\Psi)$ is smooth of codimension $3$ or empty.
Since the iterated mapping cone construction gives a resolution of $C\subset \PP^{g-1}$ it follows that $C$ is a non-empty (and therefore smooth) arithmetically Cohen-Macaulay scheme. In particular we have a surjective map 
$$ \vspace{-2mm}
\underbrace{H^0(\PP^{g-1},\OO_{\PP^{g-1}})}_{\cong \Bbbk} \to H^0(C,\OO_{C}) \to 0
$$
and therefore $C$ is smooth and connected and hence an irreducible curve.
\newline
Doing the same for curves of type I, III, IV and V  and the even genus cases the result follows for all genera except for $g=15$ (in this case $\bigwedge^2\cF^*\otimes\cL$ is not globally generated). 
For the $g=15$ case one can verify the smoothness by using \emph{Macaulay2} (see \cite{M2}).
\end{proof}

\begin{remark}
The \emph{Macaulay2} code for the $g=15$ case, as well as the code that verifies the statements in the next section can be found here:\newline
\begin{footnotesize}
\url{http://www.math.uni-sb.de/ag-schreyer/joomla/images/data/computeralgebra/fiveGonalFile.m2}
\end{footnotesize}
\end{remark}
\newpage
\begin{corollary}
 $
 \mathcal{K}_{g,5}:=\{C\in \mathcal{H}_{g,5}\ | \ \beta_{n,n+1}(C)>\beta_{n,n+1}(X)\}
 $
 is a non empty and dense subset of $\cH_{g,5}$ for odd genus $g\geq 13$ and even genus $g\geq 28$.
\end{corollary}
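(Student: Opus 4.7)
The plan is to deduce the corollary by combining the previous two results with the observation that the balancing conditions cut out an open subset of $\cH_{g,5}$.

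Non-emptiness is immediate: for every odd $g\geq 13$ and every even $g\geq 28$, Proposition \ref{existBalanced} produces a smooth irreducible $5$-gonal canonical curve $C$ satisfying the balancing conditions, and Theorem \ref{mthm} then yields $\beta_{n,n+1}(C)>\beta_{n,n+1}(X)$, so $C\in \mathcal{K}_{g,5}$.

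For density, I would argue that the subset $U\subset \cH_{g,5}$ consisting of curves that satisfy the balancing conditions is open. The scroll $X=S(e_1,\dots,e_4)$ is recovered from the splitting type of a vector bundle on $\PP^1$, as already used in Remark \ref{scrollBalanced}; since splitting types are upper-semicontinuous in families, scrolls of balanced type form an open locus in $\cH_{g,5}$. Similarly, the partition $(a_1,\dots,a_5)$ records the degree data of the relative $\OO_\PPE$-resolution from Theorem \ref{Sch4.4}; these degrees are controlled by graded Betti numbers of a flat family of $\OO_\PPE$-modules, which again jump only on closed subsets, so balancedness of $(a_1,\dots,a_5)$ is likewise an open condition. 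Since $\cH_{g,5}$ is irreducible, a non-empty open subset is automatically dense, and Theorem \ref{mthm} yields $U\subset \mathcal{K}_{g,5}$, whence $\mathcal{K}_{g,5}$ is dense.

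The one step that requires genuine care is a rigorous semicontinuity statement for the partition $(a_1,\dots,a_5)$ across the Hurwitz scheme: this demands setting up a relative version of the resolution in Theorem \ref{Sch4.4} over $\cH_{g,5}$ and then invoking standard upper-semicontinuity for graded Betti numbers of flat families of Gorenstein modules. Once this is in place, the corollary is a formal consequence of Theorem \ref{mthm} and Proposition \ref{existBalanced}.
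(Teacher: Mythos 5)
Your argument is correct and is essentially the one the paper intends: the paper itself notes just before Proposition \ref{existBalanced} that the balancing conditions are open, so that exhibiting one balanced example per genus combined with Theorem \ref{mthm} and the irreducibility of $\cH_{g,5}$ gives non-emptiness and density. The only addition the paper makes is the subsequent remark that, since $\beta_{n,n+1}(X)=n\binom{f}{n+1}$ is constant on $\cH_{g,5}$ and Betti numbers are upper semicontinuous, $\mathcal{K}_{g,5}$ is also closed and hence in fact equals all of $\cH_{g,5}$.
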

By the semi-continuity on the Betti numbers, it follows in fact that 
$$\{C\in \mathcal{H}_{g,5}\ | \ \beta_{n,n+1}(C)=\beta_{n,n+1}(X)\}=\emptyset .$$

\section{A First Example}
In this section, we discuss the case of a general $5$-gonal canonical curve of genus $13$. The rational normal scroll $X$ swept out by the $g^1_5$ on $C$ is therefore a $4$-dimensional scroll of type $S(3,2,2,2)$ and degree $f=9$. 
The curve $C\subset \PPE$ has a resolution of the form
$$
0\to\OO_\PPE(-5H+7R)\to\
\overset{\OO_\PPE(-3H+5R)}{\underset{\OO_\PPE(-3H+4R)^{\oplus 4}}{\oplus}}
\overset{\Psi}{\longrightarrow} 
\overset{\OO_\PPE(-2H+2R)}{\underset{\OO_\PPE(-2H+3R)^{\oplus 4}}{\oplus}}
\to \OO_\PPE\to \OO_C\to 0
$$
where $\Psi$ is a skew-symmetric matrix with entries as indicated below 
\begin{equation}\label{g13Psi}
(\Psi) \thicksim
\begin{Small}
\left(
\begin{array}{ccccc}
  0   &(H-2R) &(H-2R) &(H-2R) &(H-2R)   \\
    &0   &(H-R) &(H-R) & (H-R)   \\
    &   &0 & (H-R) & (H-R)   \\
    &  &   & 0 & (H-R) \\
    &  &  &  & 0
\end{array}
\right) \end{Small}
\end{equation}
We resolve the $\OO_\PPE$-modules in the resolution above by Eagon-Northcott type complexes and determine the rank of the maps which give rise to non-minimal parts in the iterated mapping cone. As in section \ref{pmthm}, we denote by $F=H^0(\PPE,\OO_\PPE(H-R))$ and by 
$G=H^0(\PPE,\OO_\PPE(R))$.
$$
\begin{xy}
\xymatrix@R=6mm{
&\\  
 \sum_{i=1}^5\cC^{b_i}_3(-3) =\overset{\bigwedge^3F\otimes S_2G(-6)}{\underset{(\bigwedge^3F\otimes S_1G(-6))^{\oplus 4}}{\oplus}} 
\ar[u]\ar[r]^{\psi_3} &\sum_{i=1}^5\cC^{a_i}_3(-2)=\overset{\bigwedge^{ 4}F(-6)}{\underset{(\bigwedge^4F(-5))^{\oplus 4}}{\oplus}}\ar[u]\\
\sum_{i=1}^5\cC^{b_i}_4(-3)=\overset{\bigwedge^4F\otimes S_1G(-7)}{\underset{(\bigwedge^4F(-7))^{\oplus 4}}{\oplus}} \ar[u]\ar[r]^{\psi_4}
&\sum_{i=1}^5\cC^{a_i}_4(-2)=\overset{\bigwedge^5F\otimes D_1G^*(-7)}{\underset{(\bigwedge^5F(-7))^{\oplus 4}}{\oplus}}\ar[u] \\
\sum_{i=1}^5\cC^{b_i}_5(-3)=\overset{\bigwedge^5F(-8)}{\underset{(\bigwedge^5F(-9))^{\oplus 4}}{\oplus}} 
\ar[u]\ar[r]^{\psi_5} &\sum_{i=1}^5\cC^{a_i}_5(-2)= \overset{\bigwedge^6F\otimes D_2G^*(-8)}{\underset{(\bigwedge^6F\otimes D_1G^*(-8))^{\oplus 4}}{\oplus}}\ar[u]\\
\ar[u] & \ar[u]
}
\end{xy}
$$
Note that by degree reasons, the maps indicated above are the only ones which give rise to non-minimal parts in the iterated mapping cone. \newline
By the Gorenstein property of canonical curves, it follows that the maps 
$$
\psi_3': (\bigwedge^3F\otimes S_1G)^{\oplus 4}(-6)\to \bigwedge^4F(-6)\ \  \text{and} \ \ 
\psi_5': \bigwedge^5F(-8)\to (\bigwedge^6F\otimes D_1G^*)^{\oplus 4}(-8)
$$
are dual to each other and one can easily check the surjectivity of $\psi_3'$. It remains to compute the rank of $\psi_4$.

\begin{proposition}
Let $\Psi$ be a general skew symmetric matrix with entries as indicated above.
Then the induced matrix 
$\psi_4: \sum_{i=1}^5\cC_4^{b_i}(-3)\to\sum_{i=1}^5\cC_4^{a_i}(-2)$ has a six dimensional kernel.
\end{proposition}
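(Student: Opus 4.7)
My plan is to combine an explicit construction of six kernel elements (giving $\dim\ker\psi_4\geq 6$) with a specialization argument for the matching upper bound.

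First, I would decompose $\psi_4$ into a $5\times 5$ block matrix $(\psi_4^{(ij)})$ where $\psi_4^{(ij)}\colon \cC^{b_j}_4(-3)\to \cC^{a_i}_4(-2)$ corresponds to the $(i,j)$-entry $\Psi_{ij}$ of $\Psi$; the diagonal blocks vanish by skew-symmetry. By Lemma \ref{remMartens} each off-diagonal block is explicit: for $i,j\in\{2,\dots,5\}$ with $i\neq j$ the block $\bigwedge^4 F\to \bigwedge^5 F$ is $f\mapsto f\wedge\Psi_{ij}$, with $\Psi_{ij}\in H^0(\OO_\PPE(H-R))=F$; for $i\geq 2$ the block $\bigwedge^4 F\otimes G\to \bigwedge^5 F$ is $f\otimes g\mapsto f\wedge\Psi_{i1}(g)$, viewing $\Psi_{i1}\in H^0(\OO_\PPE(H-2R))$ as a linear map $G\to F$; and the blocks $\psi_4^{(1j)}$ for $j\geq 2$ have the transposed shape.

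Second, imitating the construction in the proof of Theorem \ref{mthm}, I would produce kernel elements supported in the first source summand. For $g=\alpha s+\beta t\in G$ set
\[
v(g):=\bigl(\Psi_{21}(g)\wedge\Psi_{31}(g)\wedge\Psi_{41}(g)\wedge\Psi_{51}(g)\bigr)\otimes g \in \bigwedge^4 F\otimes G=\cC^{b_1}_4(-3),
\]
and $\widetilde v(g):=(v(g),0,0,0,0)$. Then $\widetilde v(g)\in\ker\psi_4$: the image in $\cC^{a_1}_4(-2)$ vanishes since $\psi_4^{(11)}=0$, and for $i=2,\dots,5$ the image in $\cC^{a_i}_4(-2)$ equals $\bigl(\bigwedge_{k=2}^5\Psi_{k1}(g)\bigr)\wedge\Psi_{i1}(g)=0$ by antisymmetry. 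Writing $u_i:=\Psi_{i1}(s)$ and $w_i:=\Psi_{i1}(t)$ in $F$ and expanding, $v(g)$ becomes a homogeneous polynomial of degree $5$ in $(\alpha,\beta)$ whose six monomial coefficients have the form $F_k\otimes s+F_{k-1}\otimes t$ for $k=0,\dots,5$ (with the convention $F_{-1}=F_5=0$), where $F_k\in\bigwedge^4 F$ is the sum of all four-wedges using exactly $k$ of the $w_i$ and $4-k$ of the $u_j$. These $F_k$ lie in pairwise distinct weight spaces of the $\mathbb Z$-grading on $\bigwedge^4\langle u_2,w_2,\dots,u_5,w_5\rangle$ counting the number of $w$-factors, so they are linearly independent for generic $\Psi$; separating the $\otimes s$ and $\otimes t$ parts then shows the six coefficients are themselves linearly independent, yielding $\dim\ker\psi_4\geq 6$.

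Third, for the matching bound $\dim\ker\psi_4\leq 6$ I would pick a specific skew-symmetric $\Psi$ of the shape \eqref{g13Psi} with random entries, reduce $\psi_4$ to a concrete $756\times 756$ matrix over a field, and verify $\dim\ker\psi_4=6$ directly; this is carried out in the accompanying \emph{Macaulay2} script. Upper semi-continuity of the kernel dimension (equivalently, lower semi-continuity of the rank) on the affine parameter space of admissible $\Psi$ then propagates the bound to generic $\Psi$. The main obstacle lies here: a purely conceptual argument would have to rule out kernel elements with nonzero components in $\cC^{b_j}_4(-3)$ for some $j\geq 2$, but the different block types (wedging with an element of $F$ versus evaluation of a map $G\to F$) mix nontrivially across the $5\times 5$ block structure, and I do not see a clean symmetry argument that pins the dimension down to exactly six, which is why the computational verification seems essential.
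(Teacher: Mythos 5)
Your proposal is correct and follows essentially the same route as the paper: the kernel elements $\bigl(\Psi_{21}(g)\wedge\Psi_{31}(g)\wedge\Psi_{41}(g)\wedge\Psi_{51}(g)\bigr)\otimes g$ supported in the first summand are exactly the ones used there, your weight-grading argument for the independence of the six coefficients is just a rephrasing of the paper's observation that they trace out a rational normal quintic in $\PP(\Syz)$, and the paper likewise obtains the matching upper bound only by a \emph{Macaulay2} computation on one example combined with semicontinuity. No substantive difference.
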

\begin{proof}
According to section \ref{section2}, we can write down the relevant cohomology groups. Let 
$\{s,t\}$ be a basis of $G=H^0(\PPE,\OO_\PPE(R))$ and $\{\varphi_1\}$ be a basis of 
$H^0(\PPE,\OO_\PPE(H-3R))$ then a basis of $H^0(\PPE,\OO_\PPE(H-2R))$ is given by 
$\{s\varphi_1,t\varphi_1,\varphi_2,\varphi_3,\varphi_4\}$. We consider the submatrix 
$\psi_{(41)}:\bigwedge^4F\otimes S_1G(-7)\to\left(\bigwedge^5F(-7)\right)^4$ of $\psi_4$
induced by the first column of $\Psi$. As in the proof of Theorem \ref{mthm}, the map $\psi_{(41)}$ is given as the composition
\begin{center}$
\begin{xy}
\xymatrix{
\bigwedge^4F\otimes S_1G \cong \bigwedge^4F\otimes H^0(\OO_\PPE(R))
\ar[d]^{id\otimes \Psi_{(41)}} \\
 \bigwedge^4F\otimes H^0(\OO_\PPE(H-R))^{\oplus 4}\cong\bigwedge^4F\otimes F^{\oplus 4}
 \ar[r]^{\ \ \ \ \ \ \ \ \ \ \ \ \ \ \ \ \ \ \  \wedge} 
 &(\bigwedge^5F)^{\oplus 4}  .
}
\end{xy}
$\end{center}
 By our generality assumption on $C$, we can assume that the $4$ entries of $\Psi_{(41)}$ are independent and after acting with an element in $\Aut(X)$, we can furthermore assume that 
$\Psi_{(41)}=(s\varphi_1,\varphi_2,\varphi_3,\varphi_4)^t$. It follows that elements of the form 
$$
(\lambda s +\mu t)s\varphi_1\wedge (\lambda s +\mu t)\varphi_2\wedge (\lambda s +\mu t)\varphi_3\wedge (\lambda s +\mu t)\varphi_4\otimes (\lambda s +\mu t),\text{with}\  \lambda,\mu\in\Bbbk
$$
lie in the kernel of $\psi_{(41)}$. Expanding those elements we get
 $$
 \lambda^5s^2\varphi_1\wedge s\varphi_2\wedge s\varphi_3\wedge s\varphi_4\otimes s+\cdots + \mu^5st\varphi_1\wedge t\varphi_2\wedge t\varphi_3\wedge t\varphi_4\otimes t
 $$
 and conclude that a rational normal curve of degree $5$ lies in $\PP(\text{Syz})$ where 
 $\text{Syz}\subset\Tor_6^T(T/I_C)_{7}$ is the subspace of the 
 $6^{\text{th}}$ syzygy module spanned by the extra syzygies and $I_C\subset T$ denotes the ideal of the canonical curve $C$. We get $\beta_{6,7}(C)\geq \beta_{6,7}(X)+6=222$ and by computing one example using \emph{Macaulay2}, it follows that $\psi_{4}$ has a $6$-dimensional kernel in general. 
\end{proof}

\begin{remark}
A direct computation using \emph{Macaulay2} shows that none of the entries of the skew symmetric matrix $\Psi$ can be made zero by suitable row and column operations respecting the skew symmetric structure of $\Psi$. By \cite[Section 5]{Sch} this implies  that the $6$ extra syzygies are not induced by an additional linear series on $C$.
\end{remark}

It arises the question how the extra syzygies of a $5$-gonal canonical curve $C\subset \PP^{12}$ differ from the syzygies induced by the scroll swept out by the $g^1_5$ on $C$. At least in the genus $g=13$ case we can give an answer in this direction by introducing the notion of \emph{syzygy schemes}, originally defined in \cite{Ehbauer}. 

\begin{defrmk}
Let $C\subset \PP^{g-1}$ be an smooth and irreducible canonical curve and let $I_C\subset S$ be the ideal of $C$. Let further
\begin{center}$
\textbf{\upshape{L}}_\bullet: S \leftarrow S(-2)^{\beta_{1,2}}\leftarrow S(-3)^{\beta_{2,3}}\leftarrow \cdots
$\end{center}
be the linear strand of a minimal free resolution of $S_C=S/I_C$.
For a $p^{\text{th}}$ linear syzygy 
$s\in L_p$, let $V_s$ be the smallest vector space such that the following diagram commutes
\begin{align*}
L_{p-1} \ \ \ \ \ &\longleftarrow \  \ \ \ \  L_p \\
\cup \ \ \ \ \ \ \ &\ \ \ \ \ \ \ \ \ \ \ \  \cup \\[-2mm]
V_s\otimes S(-p) \ &\longleftarrow \ S(-p-1)\cong \langle s\rangle \ .
\end{align*}
The \emph{rank of the syzygy} $s$ is defined to be $\rank(s):=\dim V_s$. \newline
Since $\Hom(\textbf{\upshape{L}}_\bullet,S)$ is a free complex and the Koszul complex is exact, it follows that the maps of the dual diagram extend to a morphism of complexes. Dualizing again we get 
\begin{center}$
\begin{xy}
\xymatrix{
&S&L_1 \ar[l] &\ar[l] \cdots \ar[l]  & \ar[l] L_{p-1}&\ar[l] L_p\\
&\bigwedge^{p}V_s\otimes S(-1) \ar[u]_{\varphi_{p}}& \ar[l] \bigwedge^{p-1}V_s\otimes S(-2)\ar[u] & \ar[l] \cdots  
& \ar[l]V_s\otimes S(-p) \ar[u] & \ar[l] S(-p-1)\ar[u] \ .
}
\end{xy}
$\end{center}
By degree reasons there are only trivial homotopies and therefore all the vertical maps except $\varphi_{p}$ are unique. The \emph{syzygy scheme} $\Syz(s)$ of $s$ is the the scheme defined by the ideal 
$$
I_s= \im(S\longleftarrow\bigwedge^{p-1} V_s\otimes S(-2)).
$$
The $p^{\text{th}}$-\emph{syzygy scheme} $\Syz_p(C)$ of a curve $C$  is defined to be the intersection
$
\bigcap_{s\in L_p} \Syz(s).
$
\end{defrmk}
Any $p^{\text{th}}$-syzygy of a canonical curve has rank $\geq p+1$ and the
syzygies of rank $p+1$ are called scrollar syzygies. The name is justified by a theorem due to von Bothmer:

\begin{theorem}[\cite{bothmer2}]
Let $s\in L_p$ be a $p^{\text{th}}$ scrollar syzygy. Then $\Syz(s)$ is a rational normal scroll of degree $p+1$  and codimension $p$ that contains the curve $C$.
\end{theorem}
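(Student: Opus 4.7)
The plan is to show that $\Syz(s)$ is cut out by the $2\times 2$ minors of an explicit $2\times(p+1)$ matrix $\Phi$ of linear forms on $\PP^{g-1}$. Once that is established, the geometric conclusion follows immediately from the classical description of varieties of minimal degree as determinantal scrolls, and the containment $C\subset\Syz(s)$ is automatic from $I_s\subset I_C$.

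The first step is the extraction of $\Phi$ from $(s,V_s)$. Scrollarity gives $\dim V_s=p+1$, so $\bigwedge^{p+1}V_s$ is one-dimensional; fix a generator $\omega$, identifying $\bigwedge^{p}V_s\cong V_s^*$. The morphism of complexes $\varphi_\bullet$ in the definition of $\Syz(s)$ supplies two canonical maps $V_s^*\to S_1$: the first is $\varphi_0\colon\bigwedge^p V_s\otimes S(-1)\to S$ read through the identification above; the second is the composition $\langle s\rangle=S(-p-1)\hookrightarrow L_p\xrightarrow{d} L_{p-1}$, which by minimality of $V_s$ lands in $V_s\otimes S(-p)$ and thus corresponds to an element of $V_s\otimes S_1\cong\Hom(V_s^*,S_1)$. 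Stacking these two maps as rows yields the desired $\Phi\colon V_s^*\otimes S(-1)\to S^{\oplus 2}$.

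The heart of the proof is the identification $I_s=I_2(\Phi)$. Both ideals are generated by images of $\bigwedge^{p-1}V_s\cong\bigwedge^2 V_s^*$ in $S_2$; what must be shown is that $\varphi_1\colon\bigwedge^{p-1}V_s\otimes S(-2)\to L_1\to S$ sends each $v_{i_1}\wedge\cdots\wedge v_{i_{p-1}}$ to the corresponding $2\times 2$ minor of $\Phi$. This follows formally from the observation, noted in the definition, that $\varphi_\bullet$ is uniquely determined in the range relevant to $I_s$: construct in parallel the morphism from the Eagon--Northcott complex of $\Phi$ into $L_\bullet$, check that it agrees with $\varphi_\bullet$ at the top ($\varphi_0$) and at the bottom ($\varphi_p$, which is the inclusion $\langle s\rangle\hookrightarrow L_p$), and invoke uniqueness to force the two morphisms to coincide throughout.

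Finally one shows $\Phi$ is $1$-generic: no choice of bases makes a row or column degenerate, for otherwise the image of $s$ in $L_{p-1}$ would already factor through a proper subspace of $V_s$, contradicting minimality. For such a $\Phi$, the ideal $I_2(\Phi)$ defines a rational normal scroll of codimension $p$ and degree $p+1$. The main obstacle is the matching in Step~2: one must verify that no spurious quadrics appear in $\varphi_1(\bigwedge^{p-1}V_s)$ beyond the $2\times 2$ minors of $\Phi$, which means tracking the Koszul comultiplication $\bigwedge^{p-1}V_s\to\bigwedge^{p-2}V_s\otimes V_s$ of the bottom complex against the determinantal structure of $\Phi$. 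The cleanest route is the double-complex bookkeeping sketched above, using exactness of $L_\bullet$ and the absence of homotopies in the linear strand in the relevant bidegrees.
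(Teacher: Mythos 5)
The paper itself offers no proof of this statement: it is imported verbatim from von Bothmer's work and used as a black box, so there is no in-paper argument to measure you against. Your outline does follow the standard route to this result --- realize $I_s$ as the ideal of $2\times 2$ minors of a $1$-generic $2\times(p+1)$ matrix $\Phi$ of linear forms and then quote the classification of varieties of minimal degree --- and the containment $C\subset \Syz(s)$ is indeed immediate from $I_s\subseteq I_C$. So the architecture is sound.

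The step you dispatch in one sentence, however, is the one that fails as written: $1$-genericity. First, ``no choice of bases makes a row or column degenerate'' is strictly weaker than $1$-genericity, which demands $u\Phi v\neq 0$ for \emph{every} pair of nonzero $u\in\Bbbk^2$, $v\in V_s^*$; a matrix such as $\left(\begin{smallmatrix}0&x_1&x_2\\ x_3&x_4&x_5\end{smallmatrix}\right)$ with independent entries has no degenerate generalized row or column yet is not $1$-generic, and its minors cut out a reducible scheme rather than a scroll. Second, minimality of $V_s$ only controls the \emph{second} row of your $\Phi$: it says the element of $V_s\otimes S_1$ defining $\langle s\rangle\to V_s\otimes S(-p)$ lies in no $V'\otimes S_1$ with $V'\subsetneq V_s$, i.e.\ that row is injective as a map $V_s^*\to S_1$. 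It says nothing about generalized entries involving the first row, which is extracted from the opposite end of the comparison morphism. To close the gap you must use the curve itself: since $I_2(\Phi)=I_s\subseteq I_C$ with $I_C$ prime and containing no linear forms, a vanishing generalized entry (say $\Phi_{11}=0$ after a change of bases) forces $\Phi_{21}\Phi_{1j}\in I_C$ for all $j$, hence by primality either the whole first generalized column or the whole first generalized row vanishes, and these degenerate cases are then excluded by minimality of $V_s$ and by the resulting collapse of $I_s$, respectively. A secondary caution: your first row is read off from $\varphi_0$ (the paper's $\varphi_p$), which is precisely the one comparison map that is \emph{not} unique, so before invoking ``uniqueness'' to get $I_s=I_2(\Phi)$ you must either fix a specific lift or check that $I_2(\Phi)$ does not depend on the choice; since $I_s$ only involves the (unique) map out of $\bigwedge^{p-1}V_s\otimes S(-2)$, the matching argument should be organized around that map.
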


We can now come back to our example of a $5$-gonal genus $13$ curve. Recall that in this case the space of extra syzygies can be identified with the kernel of the map
\begin{center}$
\cC^{5}_4(-3) \to (\cC^{3}_4)^{\oplus 4}(-2)
$\end{center}
 which is induced by the first column of the skew symmetric matrix 
$\Psi$.\newline
We denote by $\Pf_1,\dots,\Pf_4 \in H^0(\OO_\PPE,\OO_\PPE(2H-3R))$ the $4$ Pfaffians of the matrix $\Psi$ that involve the first column and consider the iterated mapping cone 
$$
\left[\left[\cC^5 \to (\cC^3)^{\oplus 4}\right]\to\cC^0\right],
$$
where $\sum_{i=1}^4\cC^3\to \cC^0$ is induced by the multiplication with $(\Pf_1,\dots,\Pf_4)$.
This complex is a resolution of the ideal $J$ generated by the $4$ Pfaffians as an $\OO_{\PP^{12}}$-module. In particular the minimized resolution is a subcomplex of the minimal free resolution of $S_C$. \newline
Since all extra syzygies are induced by the first column of $\Psi$, it follows that the $6^{\text{th}}$ syzygy modules in the linear strand of these minimal resolutions are canonically isomorphic.
 Therefore $\Syz_6(V(J))$ and $\Syz_6(C)$ coincide and $V(J)\subset\Syz_6(C)$.

 \begin{proposition}\label{propSyzIC}
 Let $C\subset \PP^{12}$ be a general $5$-gonal canonical curve. Then 
 $\Syz_6(C)$ is the scheme cut out by the $4$ Pfaffians of $\Psi$ involving the first column. In particular 
 \begin{center}$\Syz_6(C)=C\cup p$\end{center}
 for some point $p\in X$.
 \end{proposition}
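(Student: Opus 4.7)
The plan is to prove both claims together by first matching $\Syz_6(C)$ with the scheme cut out on $X$ by the four Pfaffians, and then explicitly identifying that scheme via the Pfaffian relations.

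For the first step, the discussion preceding the proposition already gives $V(J)\subseteq\Syz_6(C)=\Syz_6(V(J))$, where $V(J)$ denotes the subscheme of $X\subseteq\PP^{12}$ cut out by $\Pf_1,\ldots,\Pf_4$. For the reverse inclusion, equivalent to $I_{\Syz_6(C)}\supseteq I_X+J$, I would split the linear syzygies of $C$ in $L_6$ into the $\beta_{6,7}(X)=216$ ``old'' syzygies coming from the Eagon--Northcott complex of $X$ and the $6$ extra syzygies. The old ones are also syzygies of $X$; applying the $\varphi_6$-construction to them produces the $2\times 2$ minors of the scroll matrix $\Phi$, which generate $I_X$. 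The extra syzygies, by construction, come from the sub-mapping-cone $[[\cC^5\to(\cC^3)^{\oplus 4}]\to\cC^0]$ resolving $V(J)$, and tracing $\varphi_6$ through this cone shows that the $\varphi_6$-images of the rational normal quintic family of extra syzygies found in the previous proposition sweep out $J$. Together this gives $I_{\Syz_6(C)}\supseteq I_X+J$.

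For the second step, let $\Pf_0$ denote the fifth Pfaffian of $\Psi$ (obtained by deleting the first row and column, and so not involving the first column), so that $I_C=I_X+J+(\Pf_0)$. The Buchsbaum--Eisenbud relation $\Psi\cdot\vec{\Pf}=0$, read off on rows $i=2,\ldots,5$ using $\Psi_{ii}=0$, yields identities
\[
\Psi_{i1}\cdot\Pf_0\ \in\ J\qquad(i=2,3,4,5).
\]
So a point $x\in V(J)$ either satisfies $\Pf_0(x)=0$---in which case $x\in C$---or has all $\Psi_{i1}(x)=0$; conversely, the vanishing locus of the first column is contained in $V(J)$, since each entry of the first column divides all of $\Pf_1,\ldots,\Pf_4$. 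On $X\cong\PPE$ the four sections $\Psi_{21},\ldots,\Psi_{51}\in H^0(\OO_\PPE(H-2R))$ cut out a subscheme of expected degree
\[
(H-2R)^4\ =\ H^4-8\,H^3R\ =\ f-8\ =\ 1
\]
(using $R^2=0$, $H^4=f=9$, $H^3R=1$), hence a single reduced point $p\in X\setminus C$ for generic $\Psi$. Combining the two steps gives $\Syz_6(C)=V(I_X+J)=C\cup\{p\}$.

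The principal obstacle is the claim in the first step that the extra syzygies produce exactly the ideal $J$ via the $\varphi_6$ construction, rather than some proper sub-ideal. Carrying out $V_s$ and the induced map $\bigwedge^{5}V_s\to S_2$ for an extra syzygy $s$ in the quintic family is delicate, and as the text indicates, this is supported in practice by the \emph{Macaulay2} computation cited earlier.
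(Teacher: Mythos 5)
Your overall strategy for the main equality matches the paper's: the inclusion $V(J)\subseteq \Syz_6(C)$ is exactly the one extracted from the mapping-cone discussion preceding the proposition, and for the reverse inclusion the paper, like you, ultimately falls back on a \emph{Macaulay2} verification of one example. The structural argument you sketch for that reverse inclusion --- that the $216$ scrollar syzygies of $X$ already cut out $I_X$ via the $\varphi_6$-construction, and that the quintic family of extra syzygies cuts out all of $J$ rather than a proper subideal --- is plausible but not carried out (one would need, e.g., that $\bigcap_s\Syz(s)$ over the scrollar syzygies of $X$ is exactly $X$); you flag this honestly, and since the paper offers nothing beyond the computation here either, it is not a defect relative to the paper. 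Where you genuinely go beyond the paper is the ``in particular'' clause, which the paper asserts without argument: your derivation from the Buchsbaum--Eisenbud relations $\Psi_{i1}\cdot\Pf_0\in J$ for $i=2,\dots,5$, combined with the intersection number $(H-2R)^4=H^4-8H^3R=9-8=1$ on $\PPE$, correctly identifies the residual locus of $C$ in $V(J)$ as the single reduced point where the first column of $\Psi$ vanishes, off $C$ for general $\Psi$. Only one phrasing slip there: the first-column entries do not \emph{divide} $\Pf_1,\dots,\Pf_4$; rather each term of each of these Pfaffians contains a first-column entry, so $\Pf_1,\dots,\Pf_4$ lie in the ideal generated by the first column --- which is what your argument actually needs.
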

 \begin{proof}
 One inclusion follows from the discussion above. 
 The other inclusion follows by computing one example using \emph{Macaulay2}.
 \end{proof}
 
\begin{remark}
Since we do not have a full description of the space of extra syzygies for $5$-gonal canonical curves of higher genus, 
a similar approach does not work in these cases.
\end{remark}

\medskip

\begin{acknowledgement}
This article is based on my Master's thesis. First of all I would like to thank my advisor Frank-Olaf Schreyer for valuable discussions on the topic and continuous support.
I would further like to thank Michael Hahn for carefully reading this article and pointing out many typos and misprints.
\end{acknowledgement}

\bibliographystyle{alpha}
\bibliography{mybib}

\begin{thebibliography}{GvB07}

\bibitem[Bal89]{Bal}
E.~Ballico.
\newblock A remark on linear series on general {$k$}-gonal curves.
\newblock {\em Boll. Un. Mat. Ital. A (7)}, 3:195--197, 1989.

\bibitem[BE77]{BE3}
David~A. Buchsbaum and David Eisenbud.
\newblock Algebra structures for finite free resolutions, and some structure
  theorems for ideals of codimension {$3$}.
\newblock {\em Amer. J. Math.}, 99:447--485, 1977.

\bibitem[EH87]{EH}
David Eisenbud and Joe Harris.
\newblock On varieties of minimal degree (a centennial account).
\newblock In {\em Algebraic geometry, {B}owdoin, 1985 ({B}runswick, {M}aine,
  1985)}, volume~46 of {\em Proc. Sympos. Pure Math.}, pages 3--13. Amer. Math.
  Soc., Providence, RI, 1987.

\bibitem[Ehb94]{Ehbauer}
Stefan Ehbauer.
\newblock Syzygies of points in projective space and applications.
\newblock In {\em Zero-dimensional schemes ({R}avello, 1992)}, pages 145--170.
  de Gruyter, Berlin, 1994.

\bibitem[GS]{M2}
Daniel~R. Grayson and Michael~E. Stillman.
\newblock Macaulay2, a software system for research in algebraic geometry.
\newblock Available at http://www.math.uiuc.edu/Macaulay2/.

\bibitem[GV06]{viviani}
Sergey Gorchinskiy and Filippo Viviani.
\newblock Families of {$n$}-gonal curves with maximal variation of moduli.
\newblock {\em Matematiche (Catania)}, 61(1):185--209, 2006.

\bibitem[GvB07]{bothmer2}
Hans-Christian Graf~v. Bothmer.
\newblock Generic syzygy schemes.
\newblock {\em J. Pure Appl. Algebra}, 208(3):867--876, 2007.

\bibitem[Har81]{H}
Joe Harris.
\newblock A bound on the geometric genus of projective varieties.
\newblock {\em Ann. Scuola Norm. Sup. Pisa Cl. Sci. (4)}, 8:35--68, 1981.

\bibitem[HR98]{Hirs-Ram}
A.~Hirschowitz and S.~Ramanan.
\newblock New evidence for {G}reen's conjecture on syzygies of canonical
  curves.
\newblock {\em Ann. Sci. \'Ecole Norm. Sup. (4)}, 31(2):145--152, 1998.

\bibitem[MS86]{MS}
G.~Martens and F.-O. Schreyer.
\newblock Line bundles and syzygies of trigonal curves.
\newblock {\em Abh. Math. Sem. Univ. Hamburg}, 56:169--189, 1986.

\bibitem[Ott95]{ottaviani}
G.~Ottaviani.
\newblock {\em Variet{\`a} proiettive di codimensione piccola}.
\newblock Ist. nazion. di alta matematica F. Severi. Aracne, 1995.

\bibitem[Sch86]{Sch}
Frank-Olaf Schreyer.
\newblock Syzygies of canonical curves and special linear series.
\newblock {\em Math. Ann.}, 275:105--137, 1986.

\bibitem[Voi05]{V2}
Claire Voisin.
\newblock Green's canonical syzygy conjecture for generic curves of odd genus.
\newblock {\em Compos. Math.}, 141:1163--1190, 2005.

\end{thebibliography}
\end{document}